\documentclass[11pt]{article}
\title{A note on conjectures generalizing the road colouring theorem}
\author{Theo Morrison\\University of British Columbia, Vancouver, Canada\\ email: theogsm@student.ubc.ca}
\date{}

\usepackage{amsfonts, amsmath, amssymb, amsthm, bigints}
\usepackage[shortlabels]{enumitem}
\usepackage{tikz}
\usepackage{graphicx}
\usetikzlibrary{shapes.geometric}
\usetikzlibrary{arrows.meta}
\usetikzlibrary{arrows,decorations.pathmorphing}
\usetikzlibrary{positioning}
\tikzstyle{vtx} = [circle, minimum width = 2mm, fill, inner sep = 0pt]

\newtheorem{theorem}{Theorem}[section]

\newtheorem{definition}[theorem]{Definition}

\newtheorem{conjecture}[theorem]{Conjecture}

\newtheorem{proposition}[theorem]{Proposition}

\theoremstyle{definition}

\DeclareSymbolFont{symbolsC}{U}{txsyc}{m}{n}
\DeclareMathSymbol{\strictif}{\mathrel}{symbolsC}{74}
\DeclareMathSymbol{\strictfi}{\mathrel}{symbolsC}{75}
\DeclareMathSymbol{\strictiff}{\mathrel}{symbolsC}{76}
\usepackage{fullpage}

\begin{document}

\maketitle
\noindent
\section{Introduction}
The road colouring theorem characterizes the class of strongly connected directed graphs with constant out-degree that admit a synchronizing road colouring. The subject of this paper is a pair of related conjectures that generalize the road colouring theorem to graphs with non-constant out-degree. The first of these conjectures is the $O(G)$ conjecture, which was first proposed by Ashley-Marcus-Tuncel in \cite{AMT} as part of a characterization of isomorphism of one-sided markov chains. It was also observed in \cite{AMT} that a proof of the $O(G)$ conjecture would yield a proof of the (at the time unproven) road colouring theorem. The second conjecture, called the bunchy factor conjecture, strengthens the $O(G)$ conjecture (in some sense), and gives a more direct generalization of the road colouring theorem. In this paper we give reasons to believe that both conjectures are true. Our main results focus on two classes of graphs, proving the bunchy factor conjecture for both classes and the $O(G)$ conjecture for one. We also present computer simulations that give some empirical evidence for the conjectures.

\section{Background}
Throughout this paper, graphs are finite, directed, and allow loops and multiple edges. Let $G=(V(G),E(G))$ be a graph with vertex set $V(G)$ and edge set $E(G)$. For a path $w$ in $G$, we write $i(w)$ and $t(w)$ for the initial and terminal vertices of $w$. For a vertex $I\in V(G)$, we write $E_I(G)=\{e\in E(G): i(e)=I\}$ for the set of outgoing edges from $I$, $E^I(G)=\{e\in E(G): t(e)=I\}$ for the set of incoming edges to $I$, and $E_I^J(G)=E_I(G)\cap E^J(G)$ for the set of edges from $I$ to a vertex $J\in V(G)$. The follower states (terminal vertices of outgoing edges) of $I\in V(G)$ are denoted by $F(I)=t(E_I(G))$. We use $L(G)$ to denote the set of (finite) paths on $G$, and write $L_I(G)=\{w\in L(G): i(w)=I\}$ and $L^I(G)=\{w\in L(G): t(w)=I\}$ for the sets of paths that start and end at a vertex $I\in V(G)$. We say that $G$ is strongly connected if there is a directed path from $I$ to $J$ for all ordered pairs $(I,J)\in V(G)\times V(G)$. The period of $G$, denoted $\text{per}(G)$, is defined as the $\gcd$ of the lengths of the cycles in $G$, and we say that $G$ is aperiodic if $\text{per}(G)=1$. 

The ``road colourings" of Trahtman's road colouring theorem are edge colourings of a graph where each colour is assigned to exactly one outgoing edge from each vertex (ie. a colouring that is bijective when restricted to the outgoing edges of any vertex). A road colouring is said to be synchronizing if there is a sequence of colours such that every path coloured by that sequence ends at the same vertex. We can now state the road colouring theorem.
\begin{theorem}[Trahtman, \cite{Trahtman}]
Every strongly connected aperiodic graph with constant out-degree admits a synchronizing road colouring.
\end{theorem}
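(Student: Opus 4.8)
The plan is to reformulate synchronization in automata-theoretic terms and then run an induction on $|V(G)|$ in which the inductive step collapses a nontrivial congruence. Fixing a road colouring turns $G$ into a deterministic automaton: each colour $a$ induces a map $I\mapsto Ia$ on $V(G)$ (the endpoint of the unique $a$-coloured edge out of $I$), and a word $w$ over the colours induces the composite map $I\mapsto Iw$. The colouring is synchronizing exactly when some word $w$ satisfies $|V(G)w|=1$. The central tool is the \emph{stability} relation: call a pair $(I,J)$ stable if for every word $u$ there is a word $v$ with $Iuv=Juv$. I would first record the two facts that drive everything: stability is an equivalence relation that is a congruence for the colour action (if $(I,J)$ is stable then so is $(Ia,Ja)$ for every colour $a$), and the colouring is synchronizing if and only if the stability relation is all of $V(G)\times V(G)$. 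The forward direction of the latter is immediate; the reverse follows because if stability is full then any two states are mergeable (take $u$ empty in the definition), so an image $V(G)w$ of minimal cardinality cannot contain two states, forcing that minimal cardinality to be $1$.

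With these facts in hand the induction proceeds as follows. Suppose $G$ admits \emph{some} colouring possessing a stable pair, and let $\rho$ be its stability congruence, now nontrivial. Because $\rho$ is a congruence and the colouring restricts to a bijection on the outgoing edges of each vertex, the quotient $G/\rho$ inherits the same constant out-degree, remains strongly connected (being a homomorphic image of a strongly connected graph), and stays aperiodic. Since $\rho$ is nontrivial, $G/\rho$ has strictly fewer vertices, so the inductive hypothesis supplies a synchronizing colouring of $G/\rho$. I would then lift this colouring to $G$: a word that synchronizes the quotient drives all of $V(G)$ into a single $\rho$-class, and since the states of a $\rho$-class are pairwise stable they can be collapsed further to one vertex, so the lifted colouring is synchronizing. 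The whole problem therefore reduces to a single existence statement: \emph{every} strongly connected aperiodic graph with constant out-degree admits a colouring with at least one stable pair.

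The crux, and the main obstacle, is this last statement, proved by a recolouring argument in which aperiodicity is essential. Starting from an arbitrary colouring, I would view a single colour $a$ as a function $f_a$ on $V(G)$; its functional digraph is a disjoint union of cycles with in-trees attached. I would choose the colour and, if necessary, recolour so as to make these attached trees as large as possible, fixing a state $r$ of maximal level for a suitable height function. Aperiodicity is precisely what prevents the degenerate situation in which every colour acts as a permutation (the case of a single cycle, where no stable pair and no synchronizing colouring exist); it lets me arrange a colour whose functional digraph genuinely branches. At the deepest part of this branching I would recolour one edge to produce two distinct states $I,J$ with $Ia=Ja$, a pair merged by a single letter.

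The delicate final point is to show that the pair $(I,J)$ is not merely merged but \emph{stable}: for every word $u$ one must produce $v$ with $Iuv=Juv$. Here I would exploit the extremal choice of the level structure, arguing that no word can separate $I$ and $J$ permanently, since a word that did so would contradict the maximality of the chosen tree (equivalently, some power of $f_a$ re-merges the images $Iu$ and $Ju$). Verifying that this extremal recolouring always yields a genuinely stable pair, rather than one that some word drives apart forever, is the technical heart of the argument and is exactly where aperiodicity must be combined carefully with the combinatorics of the functional digraph. Once the stable pair is secured, the reduction of the preceding paragraphs closes the induction and completes the proof.
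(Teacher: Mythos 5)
The paper itself states this theorem without proof (it is quoted from Trahtman as background), so your proposal has to be judged against the known proof. Your reduction is the standard and correct one: the stability relation is a congruence, synchronization is equivalent to full stability on a strongly connected automaton, and quotienting by a nontrivial stability congruence lets you induct on $|V(G)|$, with the lifted colouring synchronizing because a composition of synchronizing maps is synchronizing (this is exactly the machinery the present paper packages as the stability quotient together with Theorem \ref{degMult}). Up to that point you are fine.

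The genuine gap is the crux you yourself flag: the existence, for every strongly connected aperiodic graph of constant out-degree (other than the degenerate permutation case), of a colouring with at least one stable pair. You produce a merged pair $(I,J)$ with $Ia=Ja$ by recolouring at the deepest branch point, and then assert stability on the grounds that ``some power of $f_a$ re-merges the images $Iu$ and $Ju$.'' That mechanism fails: after an arbitrary word $u$, the images $Iu$ and $Ju$ can land in different trees, or on the same cycle of the functional digraph of $f_a$ at different phases, and then no power of $f_a$ ever merges them; a merged pair is in general \emph{not} stable, which is precisely why the theorem resisted proof for thirty years. Trahtman's actual argument does not verify stability of the merged pair directly; it runs through maximal cliques of pairwise-unsynchronizable states (equivalently, the minimal images of Propositions \ref{minImg} and \ref{minImgStab} in this paper): one uses a delicate analysis of a spanning functional subgraph chosen to maximize the number of cyclic states and then the maximal level of the attached trees --- with aperiodicity invoked to exclude the case where every colour class is forced to be a permutation --- to arrange that all states of maximal positive level lie in a single tree. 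From this one extracts two maximal cliques (minimal images) differing in exactly two states, and stability of that pair follows from the analogue of Proposition \ref{minImgStab}. None of this case analysis appears in your sketch, and your proposed substitute for it is incorrect as stated, so the proof is incomplete at exactly its hardest step.
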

Note that the constant out-degree assumption is required for $G$ to be road-colourable (ie. for $G$ to admit any road colouring at all). To generalize road colourings to graphs with non-constant out degree, we introduce the notion of right resolving graph homomorphisms. We also introduce left and bi-resolving homomorphisms, which will be important in later sections.
\begin{definition}[Left resolvers, right resolvers, and bi-resolvers]
Let $\Phi:G\to H$ be a surjective graph homomorphism with edge map $\Phi:E(G)\to E(H)$ and vertex map $\partial\Phi: V(G)\to V(H)$. We say $\Phi$ is right resolving if the restriction $\Phi|_{E_I(G)}:E_I(G)\to E_{\partial\Phi(I)}(H)$ is a bijection for all $I\in V(G)$. We say $\Phi$ is left resolving if the restriction $\Phi|_{E^I(G)}:E^I(G)\to E^{\partial\Phi(I)}(H)$ is a bijection for all $I\in V(G)$. We say $\Phi$ is bi-resolving if it is both left and right resolving.
\end{definition}
 If there is a right resolver from graphs $G$ to $H$, we say that $H$ is a right resolving factor of $G$ and write $H\leq_R G$. The set of right resolvers is closed under composition, and if $H\leq_R G$ and $G\leq_R H$ then $H\cong G$ (a right resolver between graphs of the same size is an isomorphism), so the relation $\leq_R$ is a partial ordering on isomorphism classes of directed graphs. Ashley-Marcus-Tuncel proved the following uniqueness property under this order.
\begin{theorem}[Ashley-Marcus-Tuncel, \cite{AMT}]
\label{M(G)}
For any graph $G$, there is a unique $\leq_R$-minimal graph $M(G)\leq_R G$. Moreover, there is a unique vertex map $\Sigma_G:G\to M(G)$ such that $\partial\Phi=\Sigma_G$ for all right resolvers $\Phi:G\to M(G)$.
\end{theorem}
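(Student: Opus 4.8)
The plan is to translate the order $\leq_R$ into a statement about partitions of $V(G)$, isolate one closure lemma that makes the family of right-resolving factors downward directed, and then read off both assertions; the ``moreover'' will rest on a rigidity property of minimal graphs. Call a partition $\mathcal{P}$ of $V(G)$ \emph{out-equitable} if, writing $d_B(I)$ for the number of edges of $G$ from $I$ into a block $B$, one has $d_B(I)=d_B(J)$ whenever $I,J$ lie in a common block and $B$ is any block. I claim right-resolving factors of $G$, taken up to isomorphism, correspond exactly to out-equitable partitions, with $\leq_R$ matching the coarsening order. Indeed, a right resolver $\Phi\colon G\to H$ has fibres $\partial\Phi^{-1}(v)$ forming a partition, and bijectivity of $\Phi|_{E_I(G)}$ forces $d_B(I)$ to equal the number of edges of $H$ between the corresponding vertices, hence to be independent of $I$ within its block; so the fibre partition is out-equitable and $H\cong G/\mathcal{P}$. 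Conversely an out-equitable $\mathcal{P}$ yields the quotient $G/\mathcal{P}$ (blocks as vertices, $d_B(I)$ edges into $B$) with a right-resolving quotient map, and if $\mathcal{P}$ refines $\mathcal{Q}$ then $G/\mathcal{Q}\leq_R G/\mathcal{P}$. These verifications are routine from the definitions.

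The one substantive step is a closure lemma: \emph{the join $\mathcal{P}_1\vee\mathcal{P}_2$ (the finest common coarsening) of two out-equitable partitions is out-equitable.} By transitivity along a join-connecting chain it suffices to treat $I,J$ sharing a block of $\mathcal{P}_1$ and to count edges from each into an arbitrary join-block $C$. Since the join is coarser than $\mathcal{P}_1$, the block $C$ is a disjoint union of $\mathcal{P}_1$-blocks, so the number of edges from $I$ into $C$ equals $\sum_{B\subseteq C}d_B(I)$ over those $\mathcal{P}_1$-blocks $B$; out-equitability of $\mathcal{P}_1$ makes each summand agree for $I$ and $J$, and likewise for $\mathcal{P}_2$. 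As $V(G)$ is finite there are finitely many out-equitable partitions, so iterating the lemma shows their join $\mathcal{J}$ is out-equitable; being the join, $\mathcal{J}$ is the unique coarsest one. Setting $M(G):=G/\mathcal{J}$ gives, through the dictionary, a right-resolving factor lying $\leq_R$-below every other, i.e. the required unique $\leq_R$-minimum.

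For the ``moreover'' I would first note that every right resolver $\Phi\colon G\to M(G)$ induces exactly $\mathcal{J}$: its fibre partition $\mathcal{P}_\Phi$ is out-equitable with $G/\mathcal{P}_\Phi\cong M(G)=G/\mathcal{J}$, so $\mathcal{P}_\Phi$ and $\mathcal{J}$ have equally many blocks, while $\mathcal{P}_\Phi$ refines the coarsest partition $\mathcal{J}$; equal block-counts together with refinement force $\mathcal{P}_\Phi=\mathcal{J}$. Thus $\Phi$ factors as the fixed quotient $G\to G/\mathcal{J}$ followed by an isomorphism $\bar\Phi\colon G/\mathcal{J}\to M(G)$ (a right resolver between graphs of the same size is an isomorphism, as noted above). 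For two such resolvers $\Phi,\Psi$ the composite $\bar\Psi\circ\bar\Phi^{-1}$ is a graph automorphism of $M(G)$, so it remains to show the vertex map of any automorphism of $M(G)$ is the identity.

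Here is the step I expect to be least obvious, and the crux of the ``moreover'': for any automorphism $\alpha$ of a graph, the partition into orbits of $\langle\partial\alpha\rangle$ is out-equitable, since for orbit-blocks $O,O'$ and $u'=\alpha^k(u)$ one has $d_{O'}(u')=d_{\alpha^{-k}(O')}(u)=d_{O'}(u)$, using that $\alpha$ maps edges bijectively and that $\alpha^{-k}(O')=O'$ for the orbit $O'$. A minimal graph admits no nontrivial out-equitable partition, so a nontrivial vertex-automorphism (whose orbit partition would be nontrivial) cannot exist; hence $\bar\Phi=\bar\Psi$ on vertices and $\partial\Phi=\partial\Psi$. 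Taking $\Sigma_G$ to be this common vertex map finishes the proof. The main obstacle is therefore not the directedness of $\leq_R$, which the join lemma dispatches cheaply, but recognizing this rigidity of minimal graphs via the orbit-partition observation, which is precisely what makes $\Sigma_G$ canonical.
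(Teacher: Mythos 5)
The paper does not actually prove this statement: it is imported wholesale from [AMT], so there is no in-paper argument to compare against line by line. Judged on its own, your proof is correct, and it is a genuinely self-contained route. Your dictionary between right-resolving factors and out-equitable partitions is sound (the fibre partition of a right resolver is out-equitable because $\Phi|_{E_I(G)}$ restricts to a bijection between edges from $I$ into a fibre and edges of $H$ between the corresponding vertices, and conversely an out-equitable partition supports a right-resolving quotient map after arbitrary choices of edge bijections), your join lemma is the standard argument that the coarsest equitable partition exists, and the resulting $G/\mathcal{J}$ is exactly what the literature calls the total amalgamation, which is how $M(G)$ is constructed in [AMT]; so the existence half is in the same spirit as the source, with the join lemma playing the role of AMT's directedness of amalgamations. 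The ``moreover'' is where your argument is most distinctive: reducing uniqueness of the vertex map to rigidity of $M(G)$ ($\mathcal{P}_\Phi=\mathcal{J}$ by the refinement-plus-equal-block-count argument, then killing automorphisms via the observation that the orbit partition of $\langle\partial\alpha\rangle$ is out-equitable, hence discrete on a $\leq_R$-minimal graph, so $\partial\alpha=\mathrm{id}$) is a clean and correct mechanism, and arguably slicker than tracking the congruence explicitly. Two small points you should write out rather than assert: first, that ``a minimal graph admits no nontrivial out-equitable partition'' needs the one-line justification that such a partition would yield, by closure of right resolvers under composition, a factor of $G$ with strictly fewer vertices than $M(G)$, contradicting $M(G)\leq_R H$ for all factors $H$ (right resolvers are vertex-surjective, so they cannot go from a smaller graph onto a larger one); second, the factorization $\Phi=\bar\Phi\circ q$ should be phrased at the vertex level only (the bijection $\beta$ on blocks preserves edge multiplicities, hence is the vertex map of some isomorphism), since producing compatible edge maps for $\Phi$ itself is unnecessary and slightly fiddly. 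Neither is a gap in substance.
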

Now consider again the case of a graph $G$ with some constant out-degree $k$. A road colouring of $G$ corresponds to a right resolving homomorphism from $G$ to the graph with one vertex and $k$ self loops, which we call $M_k$. Such a homomorphism ``colours" $G$ with the self loops of $M_k$. Since $M_k$ is clearly $\leq_R$-minimal, we have $M(G)=M_k$ in this case.

In the same way that a sequence of colours can be followed from each vertex in  graph under a road colouring, a transition map on the vertices of a graph can be defined by ``lifting" paths through a right resolver from the range graph to the domain graph.
\begin{definition}
Let $\Phi:G\to H$ be a right resolver. For $I\in V(G)$ and $w\in L_{\partial\Phi(I)}(H)$, we write $I\cdot_\Phi w$ to denote the terminal vertex of the unique path $\pi\in L_I(G)$ with $\Phi(\pi)=w$. For $u\in L^{\partial\Phi(I)}(H)$, we write $u\cdot_\Phi I=\{J\in \partial\Phi^{-1}(i(u)):J\cdot_\Phi u=I\}$ for the set of vertices that lift to $I$ under $u$.
\end{definition}
Since a path $w$ in the range graph of a right resolver $\Phi: G\to H$ can only be lifted to paths that start in $\partial\Phi^{-1}(i(w))$, we define the synchronization $\Phi$ in terms of an equivalence relation that refines the fibers of $\partial\Phi$.

\begin{definition}[Stability relation]
Let $G$ and $H$ be graphs and let $\Phi:G\to H$ be a right resolver. The stability relation of $\Phi$, denoted by $\sim_\Phi$, is defined as follows: for $I\in V(H)$ and $I_1',I_2'\in \partial\Phi^{-1}(I)$, we say $I_1'\sim_\Phi I_2'$ if and only if, for all $u\in L_I(H)$ there is a $v\in L_{t(u)}(H)$ such that $I_1'\cdot_\Phi uv=I_2'\cdot_\Phi uv$.
\end{definition}

\begin{definition}[Synchronizing right resolver]
A right resolver $\Phi:G \to H$ is synchronizing if the equivalence classes of $\sim_\Phi$ are the entire fibers of $\partial\Phi$.
\end{definition}

We could have equivalently defined a synchronizing right resolver as a right right resolver $\Phi:G \to H$ such that, for all $I\in V(H)$, there is a $u\in L_I(H)$ such that $|\partial\Phi^{-1}(I)\cdot_\Phi u|=1$. Since the only fiber of a right resolver $\Phi: G\to M_k$ is all of $V(G)$, this clearly generalizes the notion of a synchronizing road colouring. The stability relation, however, will be crucially important in the inductive strategy used in later sections.

It will also be useful to extend the stability relation of a right resolver $\Phi:G\to H$ to the edges of $G$: for $e,f\in E(G)$, say that $e\sim_\Phi f$ if and only if $i(e)\sim_\Phi i(f)$ and $\Phi(e)=\Phi(f)$. Note also that if $I\sim_\Phi J$ for some right resolver $\Phi:G\to H$, then $I\cdot_\Phi e\sim_\Phi J\cdot_\Phi e$ for any $e\in E_{\partial\Phi(I)}(H)$. This congruence property allows us to define quotient graphs over stability relations.
\begin{definition}[Stability quotient]
Let $G$ and $H$ be graphs, and let $\Phi:G\to H$ be a right resolver. Define a quotient graph $G/\sim_\Phi$ by setting $V(G/\sim_\Phi)=V(G)/\sim_\Phi$ and $E(G/\sim_\Phi)=E(G)/\sim_\Phi$, where $i([e]_{\sim_\Phi})=[i(e)]_{\sim_\Phi}$ and $t([e]_{\sim_\Phi})=[t(e)]_{\sim_\Phi}$. 
\end{definition}

We will use the following results from \cite{Mac}, which describe the behaviour of synchronizers and stability quotients.

\begin{theorem}[MacDonald, \cite{Mac}]
Let $G$ and $H$ be graphs such that there is a right resolver $\Phi:G\to H$. Let $\Psi:G\to G/\sim_\Phi$ be the quotient map. Then $\Psi$ is a synchronizing right resolver, and there is a right resolver $\Delta:G/\sim_\Phi\to H$ such that $\Phi = \Delta\circ\Psi$. For any such $\Delta$, $\sim_\Delta$ is trivial.
\end{theorem}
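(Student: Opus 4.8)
The plan is to treat the three assertions in turn, building everything on a single structural identity relating lifts through composable right resolvers. First I would verify that $\Psi$ is a right resolver. Injectivity of $\Psi|_{E_I(G)}$ is immediate: if $e,f\in E_I(G)$ and $e\sim_\Phi f$, then $\Phi(e)=\Phi(f)$, and since $\Phi$ is right resolving on $E_I(G)$ this forces $e=f$. For surjectivity, given a class $[e]$ with $i([e])=[I]$ we have $i(e)\sim_\Phi I$, so $\partial\Phi(i(e))=\partial\Phi(I)$ and $\Phi(e)\in E_{\partial\Phi(I)}(H)$; lifting $\Phi(e)$ through $\Phi$ from $I$ produces an edge $f\in E_I(G)$ with $\Phi(f)=\Phi(e)$ and $i(f)=I\sim_\Phi i(e)$, whence $f\sim_\Phi e$ and $\Psi(f)=[e]$.

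Next I would construct $\Delta$ explicitly by $\Delta([e])=\Phi(e)$ and $\partial\Delta([I])=\partial\Phi(I)$. This is well defined precisely because the edge stability relation forces $\Phi(e)=\Phi(f)$ when $e\sim_\Phi f$, and $\sim_\Phi$ relates only vertices lying in a common fiber of $\partial\Phi$. By construction $\Phi=\Delta\circ\Psi$, and since $\Psi|_{E_I(G)}$ and $\Phi|_{E_I(G)}$ are both bijections, $\Delta|_{E_{[I]}(G/\sim_\Phi)}=\Phi|_{E_I(G)}\circ(\Psi|_{E_I(G)})^{-1}$ is a bijection, so $\Delta$ is right resolving. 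Because $\Psi$ is surjective on vertices and edges, any $\Delta$ with $\Phi=\Delta\circ\Psi$ agrees with this one, so the last claim need only use the factorization. The key tool for the remaining parts is that lifting commutes with this factorization: if $\pi$ is the $\Psi$-lift of a quotient path $u$ starting at $I'$, then $\Phi(\pi)=\Delta(u)$, so by uniqueness of $\Phi$-lifts $I'\cdot_\Psi u=I'\cdot_\Phi\Delta(u)$; dually, lifting $w\in L(H)$ through $\Phi$ and applying $\Psi$ gives $[I']\cdot_\Delta w=[\,I'\cdot_\Phi w\,]$. Both identities follow directly from $\Phi=\Delta\circ\Psi$ and the uniqueness of lifts through a right resolver.

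With these identities in hand, synchronization of $\Psi$ reduces to the data already encoded in $\sim_\Phi$. Since $\sim_\Psi$ relates only vertices in a common fiber of $\partial\Psi$, and a fiber of $\partial\Psi$ is exactly a $\sim_\Phi$-class, it suffices to show $I_1'\sim_\Phi I_2'$ implies $I_1'\sim_\Psi I_2'$. Given a quotient path $u$, set $w=\Delta(u)\in L(H)$; the relation $I_1'\sim_\Phi I_2'$ yields an extension $x$ with $I_1'\cdot_\Phi wx=I_2'\cdot_\Phi wx$; lifting $x$ back through the right resolver $\Delta$ from $t(u)$ to a quotient path $v$ gives $\Delta(uv)=wx$, and the first transfer identity converts this into $I_1'\cdot_\Psi uv=I_2'\cdot_\Psi uv$, as required.

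Finally, for triviality of $\sim_\Delta$ I would show $[I_1']\sim_\Delta[I_2']$ forces $I_1'\sim_\Phi I_2'$, i.e. $[I_1']=[I_2']$. Fixing an arbitrary $u\in L_P(H)$ and applying the definition of $\sim_\Delta$ produces $x$ with $[I_1']\cdot_\Delta ux=[I_2']\cdot_\Delta ux$; the second transfer identity rewrites this as $I_1'\cdot_\Phi ux\sim_\Phi I_2'\cdot_\Phi ux$. Applying the definition of $\sim_\Phi$ to this pair yields a further extension $y$ synchronizing them exactly, and $v=xy$ then witnesses $I_1'\cdot_\Phi uv=I_2'\cdot_\Phi uv$. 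I expect the main obstacle to be bookkeeping the three interacting stability relations and their quantifier alternations — in particular, correctly composing the extension $x$ coming from $\sim_\Delta$ with the extension $y$ coming from $\sim_\Phi$, and ensuring each intermediate lift lands in the fiber where the next relation can be applied. Once the two transfer identities are established cleanly, each part becomes a short translation argument.
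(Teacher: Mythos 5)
Your proof is correct, but note that there is nothing in the paper to compare it against: this theorem is imported from MacDonald \cite{Mac} without proof, so your argument stands as a self-contained verification rather than a variant of an internal one. Your structure is the natural (and essentially standard) one: the explicit $\Delta([e])=\Phi(e)$, $\partial\Delta([I'])=\partial\Phi(I')$ is well defined precisely because $\sim_\Phi$ refines the fibers of $\partial\Phi$ and edge-stability forces equal $\Phi$-images, and your observation that surjectivity of $\Psi$ makes this $\Delta$ the unique factorization correctly reduces ``for any such $\Delta$'' to the explicit one. The two transfer identities $I'\cdot_\Psi u = I'\cdot_\Phi \Delta(u)$ and $[I']\cdot_\Delta w = [I'\cdot_\Phi w]$ are exactly the right tool, and your quantifier bookkeeping in the last two parts checks out: in the synchronization argument the $\Delta$-lift of the extension $x$ lands at $t(u)$ because $\partial\Delta(t(u))=t(\Delta(u))$, and in the triviality argument the composed witness $v=xy$ does verify $I_1'\sim_\Phi I_2'$ for the arbitrary prefix $u$. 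Two small points worth making explicit: the quotient graph is well defined because $e\sim_\Phi f$ implies $t(e)\sim_\Phi t(f)$ (the congruence property the paper records just before defining $G/\sim_\Phi$); and your step extracting from $I_1'\cdot_\Phi ux\sim_\Phi I_2'\cdot_\Phi ux$ a word $y$ synchronizing the pair \emph{exactly} tacitly uses that $L_{t(ux)}(H)$ is nonempty (take the prefix in the definition of $\sim_\Phi$ to be empty, or any single outgoing edge) --- harmless under the paper's conventions, but worth a sentence.
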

\begin{theorem}[Macdonald \cite{Mac}, proposition 9.1.16 in \cite{LM} for strongly connected graphs]
\label{degMult}
Let $G$, $K$, and $H$ be graphs such that there are right resolvers $\Phi:G\to K$ and $\Psi:K\to H$. Then $\Psi\circ\Phi$ is synchronizing if and only if both $\Phi$ and $\Psi$ are synchronizing.
\end{theorem}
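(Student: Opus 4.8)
The plan is to work with the alternative ``collapse'' characterization of synchronizing maps recorded just above the statement: a right resolver $\Phi$ is synchronizing exactly when for every base vertex $I$ there is a word $u$ whose lift sends the entire fiber $\partial\Phi^{-1}(I)$ to a single vertex, i.e. $|\partial\Phi^{-1}(I)\cdot_\Phi u|=1$. Using this form (rather than the quantifier-heavy definition of $\sim_\Phi$ directly) everything reduces to tracking how lifts behave under composition, so the first step is to record two functoriality identities. Writing $\lambda_\Phi(J,\tilde u)$ for the unique lift of a path $\tilde u$ through $\Phi$ starting at $J$, uniqueness of lifts gives, for $u\in L_{\partial(\Psi\circ\Phi)(J)}(H)$,
\[
\lambda_{\Psi\circ\Phi}(J,u)=\lambda_\Phi\big(J,\ \lambda_\Psi(\partial\Phi(J),u)\big),
\]
since applying $\Phi$ to the left-hand side produces a lift of $u$ through $\Psi$ based at $\partial\Phi(J)$, which the right resolver $\Psi$ lifts uniquely. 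Reading off terminal vertices, this yields the two identities I would use repeatedly:
\[
J\cdot_{\Psi\circ\Phi}u = J\cdot_\Phi\big(\lambda_\Psi(\partial\Phi(J),u)\big)
\qquad\text{and}\qquad
\partial\Phi\big(J\cdot_{\Psi\circ\Phi}u\big)=\partial\Phi(J)\cdot_\Psi u .
\]

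For the forward direction I would assume $\Psi\circ\Phi$ is synchronizing. To see $\Psi$ is synchronizing, fix $I\in V(H)$ and a word $u$ collapsing $\partial(\Psi\circ\Phi)^{-1}(I)$ to a single vertex $J_0$. Each $K'\in\partial\Psi^{-1}(I)$ has a nonempty $\Phi$-fiber (right resolvers are surjective); picking any $J$ in it, the second identity gives $K'\cdot_\Psi u=\partial\Phi(J\cdot_{\Psi\circ\Phi}u)=\partial\Phi(J_0)$, a value independent of $K'$, so $u$ collapses the $\Psi$-fiber over $I$. To see $\Phi$ is synchronizing, fix $K_0\in V(K)$, set $I=\partial\Psi(K_0)$, and take $u$ collapsing $\partial(\Psi\circ\Phi)^{-1}(I)$ to $J_0$. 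Since $\partial\Phi^{-1}(K_0)\subseteq\partial(\Psi\circ\Phi)^{-1}(I)$, the first identity with $v:=\lambda_\Psi(K_0,u)\in L_{K_0}(K)$ gives $J\cdot_\Phi v=J\cdot_{\Psi\circ\Phi}u=J_0$ for every $J\in\partial\Phi^{-1}(K_0)$, so $v$ collapses the $\Phi$-fiber over $K_0$.

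For the backward direction I would assume both $\Phi$ and $\Psi$ are synchronizing and fix $I\in V(H)$. First choose $u_1\in L_I(H)$ collapsing $\partial\Psi^{-1}(I)$ to a single vertex $K_0\in V(K)$. By the second identity, every $J\in\partial(\Psi\circ\Phi)^{-1}(I)$ satisfies $\partial\Phi(J\cdot_{\Psi\circ\Phi}u_1)=\partial\Phi(J)\cdot_\Psi u_1=K_0$, so the action of $u_1$ drives the whole $G$-fiber into $\partial\Phi^{-1}(K_0)$. Next, using that $\Phi$ is synchronizing, pick $w\in L_{K_0}(K)$ collapsing $\partial\Phi^{-1}(K_0)$ to a point, and push it down to $u_2:=\Psi(w)\in L_{t(u_1)}(H)$, which is a legitimate continuation because $i(u_2)=\partial\Psi(K_0)=t(u_1)$. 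Since the unique $\Psi$-lift of $u_2$ at $K_0$ is $w$ itself, the first identity turns the action of $u_2$ on any vertex lying over $K_0$ into the action of $w$, which collapses to a point; hence $u_1u_2$ collapses $\partial(\Psi\circ\Phi)^{-1}(I)$, and $\Psi\circ\Phi$ is synchronizing.

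I expect the main obstacle to be organizational rather than conceptual: stating the composition-of-lifts identities correctly and, in the backward direction, verifying that after the first stage the image genuinely lies inside the single fiber $\partial\Phi^{-1}(K_0)$ so that the pushed-down word $u_2$ re-lifts to exactly the chosen collapsing word $w$. The reason for preferring the collapse characterization throughout is precisely that it localizes the argument to one fiber at a time; a direct argument through the relation $\sim_\Phi$ would have to manage the ``for all $u$ there is $v$'' structure simultaneously across the composition, which is considerably more delicate.
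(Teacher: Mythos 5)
Your argument is correct, but there is nothing in the paper to compare it against line-by-line: the paper states Theorem \ref{degMult} purely by citation (MacDonald \cite{Mac}; Proposition 9.1.16 of \cite{LM} for the strongly connected case) and gives no proof. Your route is a self-contained one. Both functoriality identities are right: uniqueness of lifts through the right resolver $\Psi\circ\Phi$ forces the lift of $u$ at $J$ to equal the $\Phi$-lift of the $\Psi$-lift, and both displayed consequences follow by reading off terminal vertices and applying $\partial\Phi$. The three deductions (extracting synchronization of $\Psi$ via surjectivity of $\partial\Phi$, of $\Phi$ via restriction to a single fiber $\partial\Phi^{-1}(K_0)\subseteq\partial(\Psi\circ\Phi)^{-1}(I)$, and the two-stage word $u_1u_2$ for the converse) all check out, including the point you flagged: the second identity does guarantee that after $u_1$ the image of the whole composite fiber lies in $\partial\Phi^{-1}(K_0)$, so $u_2=\Psi(w)$ re-lifts at every relevant vertex to $w$ itself. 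It is worth contrasting this with the cited proofs: the Lind--Marcus argument runs through degree multiplicativity for finite-to-one codes (minimal images of constant cardinality, synchronizing $\Leftrightarrow$ degree one), which is why it requires strong connectedness, whereas your direct argument uses no connectivity hypothesis at all and so matches the generality of the statement as given (arbitrary graphs, attributed to MacDonald). The one place you lean on the paper rather than proving things yourself is the equivalence between the $\sim_\Phi$ definition of synchronizing and the collapse characterization; the paper asserts this equivalence without proof, and if you were writing this up independently you should include the short argument (the direction ``synchronizing $\Rightarrow$ collapse word exists'' needs the congruence property of $\sim_\Phi$, stated in the paper, plus an induction that collapses the fiber pairwise by concatenating words). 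That is a one-paragraph patch, not a gap in your reasoning.
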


\section{The $O(G)$ and bunchy factor conjectures}

Similar to the partial ordering $\leq_R$, we can define a relation $\leq_S$ on graphs by saying $H\leq_S G$ if there is a synchronizing right resolver from $G$ to $H$. The relation $\leq_S$ is reflexive and antisymmetric on graphs up to isomorphism. Theorem \ref{degMult} shows that $\leq_S$ is also transitive, so $\leq_S$ is a partial ordering. The $O(G)$ conjecture can be formulated as a statement that parallels theorem \ref{M(G)}.
\begin{conjecture}[$O(G)$ conjecture]
For all strongly connected graphs $G$, there is a unique $\leq_S$-minimal graph $O(G)\leq_S G$.
\end{conjecture}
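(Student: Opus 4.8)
The plan is to prove the stronger statement that the set $\{H : H\leq_S G\}$ has a \emph{minimum}, which is automatically the unique $\leq_S$-minimal element. Existence of minimal elements is routine: a surjective right resolver that is not an isomorphism strictly decreases the number of vertices (since a right resolver between graphs of the same size is an isomorphism), so every $\leq_S$-descending chain below a finite $G$ terminates. The content is therefore to exhibit a single graph $O(G)\leq_S G$ with $O(G)\leq_S H$ for \emph{every} synchronizing factor $H$ of $G$.

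My candidate is $O(G)=G/\sim_\Phi$ for a right resolver $\Phi:G\to M(G)$, where $M(G)$ is the $\leq_R$-minimum from Theorem \ref{M(G)}. By MacDonald's theorem the quotient map $G\to G/\sim_\Phi$ is a synchronizing right resolver, so $O(G)\leq_S G$ is immediate. To see $O(G)\leq_S H$ for an arbitrary synchronizing factor $H$, fix a synchronizing right resolver $\Theta:G\to H$. Since $H\leq_S G$ gives $H\leq_R G$, minimality of $M(G)$ yields a right resolver $\Xi:H\to M(G)$, and the composite $\Xi\circ\Theta:G\to M(G)$ is a right resolver whose vertex map must be the canonical $\Sigma_G$ of Theorem \ref{M(G)}. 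Applying MacDonald's theorem to $\Xi$ gives a synchronizing quotient map $H\to H/\sim_\Xi$, so it would suffice to identify $H/\sim_\Xi$ with $O(G)$.

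The identification would run as follows. Both $\Xi\circ\Theta=\Delta'\circ\Psi_G$ (with $\Psi_G:G\to G/\sim_{\Xi\circ\Theta}$ synchronizing and $\sim_{\Delta'}$ trivial) and $\Xi\circ\Theta=\Delta_H\circ(\Psi_H\circ\Theta)$ (with $\Psi_H:H\to H/\sim_\Xi$ synchronizing, hence $\Psi_H\circ\Theta$ synchronizing by Theorem \ref{degMult}, and $\sim_{\Delta_H}$ trivial) express $\Xi\circ\Theta$ as a trivial-stability right resolver composed with a synchronizing one. If such factorizations are unique up to isomorphism, then $G/\sim_{\Xi\circ\Theta}\cong H/\sim_\Xi$. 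To then conclude $H/\sim_\Xi\cong O(G)=G/\sim_\Phi$, I would show that for strongly connected $G$ the stability quotient $G/\sim_F$ of a right resolver $F:G\to M(G)$ depends only on $G$ and not on $F$; the natural route is that all such $F$ share the vertex map $\Sigma_G$, and the stability relation, being defined by eventual mergeability of lifts, ought to be determined by $\Sigma_G$ alone.

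The main obstacle is exactly this last invariance, together with the uniqueness of the synchronizing/trivial-stability factorization. A priori $\sim_F$ is defined through the lifts $I\cdot_F w$, which use the edge map of $F$ and not merely its vertex map; two right resolvers to $M(G)$ can carry different edge maps, and two synchronizing factors $H_1,H_2$ produce composites $\Xi_1\circ\Theta_1$ and $\Xi_2\circ\Theta_2$ with genuinely different intermediate structure. Showing that all these composites nonetheless induce one and the same equivalence relation on $V(G)$ -- equivalently, that the synchronizing part of a right resolver onto $M(G)$ is a canonical invariant of $G$ -- is the crux on which the whole argument rests. This is presumably where the conjecture resists a general proof, and why only special classes of $G$ can currently be handled.
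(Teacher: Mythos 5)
First, a framing point: the statement you set out to prove is one of the paper's two central \emph{conjectures}. The paper proves it only for the restricted class of weakly almost bunchy graphs (and proves the related bunchy factor conjecture for bi-resolving graphs); the general case is open, so there is no proof in the paper to compare yours against, and a complete argument here would resolve an open problem. Your existence step (descending $\leq_S$-chains below a finite graph terminate) is fine; the issue is uniqueness, and the ``main obstacle'' you defer at the end is not a technical lemma awaiting verification --- it is false, and its failure is exactly what makes the conjecture hard.

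By Theorem \ref{M(G)}, \emph{every} right resolver $F:G\to M(G)$ has the same vertex map $\Sigma_G$, so your hoped-for invariance --- that $\sim_F$ is determined by $\Sigma_G$ alone --- asserts precisely that all right resolvers onto $M(G)$ have the same stability relation and hence the same quotient $G/\sim_F$. Section 5 of the paper is organized around the failure of exactly this: a right resolver on a strongly connected graph is bi-resolving if and only if it has no synchronizable pair of states, so a bi-resolving $\Phi$ has trivial $\sim_\Phi$ and $G/\sim_\Phi\cong G$, while Proposition \ref{BRStab} constructs, for a non-bunchy bi-resolving $G$, a resolver $\Phi'$ with the \emph{same} vertex map and non-trivial $\sim_{\Phi'}$. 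Concretely: for a strongly connected, aperiodic, Eulerian graph $G$ of constant out-degree $k$ with $|V(G)|>1$, one has $M(G)=B(G)=M_k$ and all right resolvers $G\to M_k$ trivially share the unique vertex map; Kari's in-degree-balanced colouring is bi-resolving, giving $G/\sim_\Phi\cong G$, whereas Trahtman's theorem supplies a synchronizing colouring whose stability quotient is $M_k$ itself. Thus your candidate $O(G)=G/\sim_\Phi$ is not well defined (it genuinely depends on the edge map of $\Phi$), and for a bad choice of $\Phi$ it is not even $\leq_S$-minimal; for the same reason, a factorization into a synchronizing resolver followed by one with trivial stability does not determine the intermediate graph up to isomorphism, so the identification $G/\sim_{\Xi\circ\Theta}\cong H/\sim_\Xi$ has no basis. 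The whole difficulty of the conjecture --- and the content of Propositions \ref{wabStab} and \ref{BRStab} --- lies in finding a good \emph{edge} map among the many resolvers sharing $\Sigma_G$. The paper's intended route to uniqueness is different from yours: establish the bunchy factor conjecture (Conjecture \ref{BFC0}), whence every $\leq_S$-minimal synchronizing factor is bunchy, and uniqueness follows from Theorem \ref{bunchy} (\ref{5.9}).
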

The implication from the $O(G)$ conjecture to the road colouring theorem follows by a result from \cite{AGW}, which shows that, for all strongly connected aperiodic graphs $G$ with constant out degree $k$, $G$ and $M_k$ have a common strongly connected synchronizing extension (ie. a strongly connected graph $K$ such that $G\leq_S K$ and $M_k\leq_S K$). Assuming the $O(G)$ conjecture, it follows that $O(G)=O(M_k)$. But $O(M_k)=M_k$, so $O(G)=M_k$, and in particular $M_k\leq_S G$. If we consider a synchronizing right resolver from $G$ to $M_k$ as a road colouring, we then get a synchronizing road colouring of $G$ on $k$ colours.

The related bunchy factor conjecture requires a definition of the class of bunchy graphs. We also define the classes of almost bunchy and weakly almost bunchy graphs, which will be relevant in later sections.
\begin{definition}[Bunchy, almost bunchy, and weakly almost bunchy graphs]
Let $G$ be a graph. We say $G$ is bunchy if $\Sigma_G|_{F(I')}:F(I')\to F(\Sigma_G(I'))$ is a bijection for all $I'\in V(G)$. We say $G$ is almost bunchy if, for all pairs of states $(I,J)\in V(M(G))\times V(M(G))$, there is at most one state $I'\in \Sigma^{-1}(I)$ such that $|\Sigma^{-1}(J)\cap F(I')|\geq 2$. We say $G$ is weakly almost bunchy if, for every pair of states $(I,J)\in V(M(G))\times V(M(G))$ such that $|E_I^J(M(G))|\geq 2$, there is at most one state $I'\in \Sigma^{-1}(I)$ such that $|\Sigma^{-1}(J)\cap F(I')|=|E_I^J(M(G))|$.
\end{definition}
Note that bunchy graphs are almost bunchy and almost bunchy graphs are weakly almost bunchy. MacDonald proved in \cite{Mac} that the $O(G)$ conjecture holds for almost bunchy (and therefore bunchy) graphs. We will also use three other properties of bunchy graphs proved in \cite{Mac}.
\begin{theorem}[MacDonald, \cite{Mac}]
\label{bunchy}
Let $G$ be a graph.
\begin{enumerate}
    \item The set of right resolving bunchy factors of $G$ has a unique $\leq_R$-maximal element $B(G)$.
    \item For any bunchy $H\leq_R G$ and right resolver $\Phi: G \to H$, there are right resolvers $\Psi:G\to B(G)$ and $\Delta: B(G)\to H$ such that $\Phi = \Delta\circ\Psi$.
    \label{5.10.2}
    \item $G$ has at most one $\leq_S$-minimal synchronizing bunchy factor.
    \label{5.9}
\end{enumerate}
\end{theorem}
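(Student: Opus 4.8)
The plan is to translate ``bunchy factor'' into a condition on vertex partitions of $G$ and then exploit a meet-closure property. First I would record, using Theorem~\ref{M(G)}, that every right resolver $\Phi\colon G\to H$ has $M(H)=M(G)$ and $\Sigma_G=\Sigma_H\circ\partial\Phi$, so the fibers of $\partial\Phi$ form a partition $\mathcal P_\Phi$ of $V(G)$ refining the fibers of $\Sigma_G$. Such a $\mathcal P_\Phi$ is \emph{out-equitable} (any two vertices in one block send equally many edges to each block), and conversely every out-equitable refinement of the $\Sigma_G$-fibers arises as $\mathcal P_\Phi$ for a right resolver onto a factor of $G$, the factor graph being determined by the block-to-block edge counts. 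The next step is to check that, for such a partition, $H=G/\mathcal P_\Phi$ is bunchy if and only if $\mathcal P_\Phi$ satisfies the local condition $(\star)$: whenever $I\equiv I'$ and $J\in F(I)$, $J'\in F(I')$ with $\Sigma_G(J)=\Sigma_G(J')$, then $J\equiv J'$. Here surjectivity of $\Sigma_H$ on followers is automatic for a right resolver, so the content of bunchiness is exactly the injectivity encoded by $(\star)$.

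The heart of parts (1) and (2) is that out-equitable refinements of the $\Sigma_G$-fibers satisfying $(\star)$ are closed under common refinement (meet). Condition $(\star)$ passes to meets at once: if $I\equiv I'$ in $\mathcal P_1\wedge\mathcal P_2$ they are equivalent in each $\mathcal P_i$, so two followers with equal $\Sigma_G$-image are $\mathcal P_i$-equivalent for both $i$, hence equivalent in the meet. The place where $(\star)$ earns its keep is out-equitability of the meet: for a block $B$ of $\mathcal P_1\wedge\mathcal P_2$ and a fiber $\Sigma_G^{-1}(\bar J)$, condition $(\star)$ forces all edges from any $I\in B$ into $\Sigma_G^{-1}(\bar J)$ to land in a single $\mathcal P_1$-block and a single $\mathcal P_2$-block, hence in a single block of the meet, with total count $|E_{\bar I}^{\bar J}(M(G))|$ independent of $I$; so the meet is again out-equitable. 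Since the $\Sigma_G$-fiber partition itself is bunchy-admissible (as $M(G)$ is bunchy), the family of bunchy-admissible partitions is nonempty and meet-closed, hence has a finest member $\mathcal P^*$. Setting $B(G)=G/\mathcal P^*$ gives the $\leq_R$-maximal bunchy factor, proving (1); and because $\mathcal P^*$ refines the partition of every bunchy factor $H$, the canonical quotients give $\Psi\colon G\to B(G)$ and $\Delta\colon B(G)\to H$ with $\Phi=\Delta\circ\Psi$ (the edge map of $\Delta$ being well defined as $\Psi$ identifies fewer edges than $\Phi$), proving (2).

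For part (3), let $F$ be any synchronizing bunchy factor of $G$, via a synchronizing right resolver $\Phi\colon G\to F$. Part (2) writes $\Phi=\Delta\circ\Psi$ with $\Psi\colon G\to B(G)$ and $\Delta\colon B(G)\to F$, and Theorem~\ref{degMult} then forces $\Delta$ to be synchronizing, so $F\leq_S B(G)$. Since $B(G)$ is bunchy, the $O(G)$ conjecture holds for it, giving a minimum synchronizing factor $O(B(G))$ with $O(B(G))\leq_S F$. Composing the synchronizing maps $G\to F\to O(B(G))$ and applying Theorem~\ref{degMult} once more shows $O(B(G))\leq_S G$ synchronizingly. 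Hence, provided $O(B(G))$ is itself bunchy, it is a synchronizing bunchy factor of $G$ lying $\leq_S$-below every synchronizing bunchy factor, so it is the unique $\leq_S$-minimal one; and if no synchronizing bunchy factor exists the statement is vacuous.

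I expect the main obstacle to be this last point: verifying that the minimal synchronizing factor $O(B(G))$ of the bunchy graph $B(G)$ is bunchy. Bunchiness (the condition $(\star)$) is preserved under refining a partition but \emph{not} under coarsening, and $O(B(G))$ is a coarsening of $B(G)$, so this cannot follow formally; it should instead come from the interaction of synchronization with the bunchy structure. Concretely, I would try to show that the stability quotient of a bunchy graph is again bunchy, of which the bunchiness of $O(B(G))$ would be a special case. By contrast, the meet-closure argument for parts (1)--(2) is comparatively routine once $(\star)$ is isolated, the only real bookkeeping being the identification of bunchiness with $(\star)$ under out-equitability.
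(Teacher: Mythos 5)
The paper offers no proof of this theorem to compare against: it is quoted verbatim from \cite{Mac}, so your proposal can only be assessed on its own merits.

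For parts (1) and (2), your translation of bunchy factors into out-equitable partitions of $V(G)$ refining the $\Sigma_G$-fibers and satisfying $(\star)$, and the meet-closure argument (with $(\star)$ supplying out-equitability of the common refinement), is sound; note that the base case works because $\Sigma_{M(G)}$ is the identity, so $M(G)$ is trivially bunchy. The one soft spot is the parenthetical justification of the factorization in (2): ``the edge map of $\Delta$ being well defined as $\Psi$ identifies fewer edges than $\Phi$'' is not meaningful as stated, because a quotient right resolver has no canonical edge map --- for each block one must \emph{choose} a bijection between the out-edges of every representative and the quotient edges, and these choices must be aligned across the whole block for $\Phi$ to factor through. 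The alignment does work, but precisely because of bunchiness: in bunchy $H$, all edges from a vertex $K_0$ into a fiber $\Sigma_H^{-1}(\bar J)$ form one full parallel class $E_{K_0}^{K}(H)$ of size $|E_{\Sigma_H(K_0)}^{\bar J}(M(G))|$, and by $(\star)$ the corresponding edges from every $I$ in a block of $\mathcal P^*$ land in one common block; fixing one bijection $\delta_{\bar I,\bar J}$ per block and fiber and setting $\Psi|_{E_I(G)}=\delta_{\bar I,\bar J}^{-1}\circ\Phi|_{E_I(G)}$ piecewise then yields $\Phi=\Delta\circ\Psi$ with both maps right resolving. So (2) survives, but this argument needs to be spelled out rather than waved at.

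For part (3) the gap you flagged --- bunchiness of $O(B(G))$ --- is a genuine hole in the write-up, but your diagnosis of it is backwards: you assert that bunchiness ``is preserved under refining a partition but not under coarsening, so this cannot follow formally,'' whereas in fact bunchiness \emph{is} closed under arbitrary right resolving factors, by exactly the argument the paper itself uses for Proposition \ref{wabRRClose}. If $G$ is bunchy and $\Phi:G\to H$ is right resolving, pick $I''\in\Sigma_G^{-1}(I)$ with $\partial\Phi(I'')=I'$; lifting one edge into each $J'\in\Sigma_H^{-1}(J)\cap F(I')$ gives distinct followers of $I''$ in $\Sigma_G^{-1}(J)$ (distinct because they have distinct $\partial\Phi$-images), so $|\Sigma_H^{-1}(J)\cap F(I')|\leq|\Sigma_G^{-1}(J)\cap F(I'')|\leq 1$, and surjectivity of $\Sigma_H|_{F(I')}$ is automatic since edges of $M(G)$ lift to $H$ through any right resolver with vertex map $\Sigma_H$. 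Hence every synchronizing factor of the bunchy graph $B(G)$ --- in particular $O(B(G))$, and a fortiori any stability quotient --- is bunchy, which closes your argument; the rest of your part (3) (factoring $\Phi=\Delta\circ\Psi$ via (2), applying Theorem \ref{degMult} to get $F\leq_S B(G)$, and descending by finiteness plus uniqueness to get $O(B(G))\leq_S F$) is fine. Two minor caveats: your proof of (3) leans on the cited result that the $O(G)$ conjecture holds for bunchy graphs, which the paper records only for strongly connected graphs while the theorem is stated for all graphs, so you should either restrict to the strongly connected case (all the paper needs) or verify the cited result in the generality used; and since that result is itself from \cite{Mac}, whether invoking it is circular relative to MacDonald's actual development cannot be settled from this paper.
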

The results of theoreom \ref{bunchy} lead us to the bunchy factor conjecture. Suppose it can be shown that every strongly connected graph has a bunchy synchronizing factor. Then every minimal synchronizing factor must be bunchy, and theorem \ref{bunchy} (\ref{5.9}) would show that every strongly connected graph has a unique synchronizing factor. We are thereby lead to the following conjecture.
\begin{conjecture}[Bunchy factor conjecture, version 1]
\label{BFC0}
Every strongly connected graph has a bunchy synchronizing factor.
\end{conjecture}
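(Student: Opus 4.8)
The plan is to prove the conjecture by induction on $|V(G)|$, reducing it to a single structural statement about graphs that cannot be synchronized any further. The key preliminary step is a reformulation of what a synchronizing factor is. By MacDonald's stability quotient theorem, every right resolver $\Phi:G\to H$ produces a synchronizing right resolver $G\to G/\sim_\Phi$; conversely, if $\Theta:G\to H$ is synchronizing then its stability classes are exactly the fibres of $\partial\Theta$, so $G/\sim_\Theta\cong H$ (the induced map $\Delta:G/\sim_\Theta\to H$ is a right resolver between graphs of the same size, hence an isomorphism). Hence $G$ has a \emph{proper} synchronizing factor (a synchronizing right resolver onto a strictly smaller graph) if and only if some right resolver out of $G$ has a nontrivial stability relation. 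I would also observe that the identity map is a synchronizing right resolver, so any bunchy graph is its own bunchy synchronizing factor.

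The induction then splits on whether such a proper synchronizing factor exists. If $G$ admits a proper synchronizing factor $\Theta:G\to H$, then $H$ is strongly connected (being a surjective homomorphic image of $G$) and has strictly fewer vertices, so the inductive hypothesis gives a bunchy synchronizing factor $K\leq_S H$; since $H\leq_S G$, transitivity of $\leq_S$ (Theorem~\ref{degMult}) yields the desired bunchy factor $K\leq_S G$. Everything therefore reduces to the remaining case, which I would isolate as the following \emph{key lemma}: if $G$ is strongly connected and every right resolver out of $G$ has trivial stability relation, then $G$ is bunchy. Granting the lemma, a graph in this case is bunchy and hence its own bunchy synchronizing factor, and the recursion bottoms out. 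This reduction is also what rules out the two obvious global shortcuts: synchronizing $G$ along $\Sigma_G$ gives a synchronizing factor $G/\sim_{\Sigma_G}$ that need not be bunchy, while the maximal bunchy factor $B(G)$ of Theorem~\ref{bunchy} is bunchy but $G\to B(G)$ need not be synchronizing, so by Theorem~\ref{degMult} the composite of $G\to B(G)$ with any synchronization of $B(G)$ fails to be synchronizing over $G$.

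To prove the key lemma I would argue by contraposition, assuming $G$ is not bunchy and seeking a right resolver $\Phi:G\to H$ with $I_1'\sim_\Phi I_2'$ for some distinct $I_1',I_2'$. Non-bunchiness supplies a concrete witness: a vertex $I'$ with two outgoing edges terminating at distinct follower states $J_1'\neq J_2'$ that lie in a common fibre of $\Sigma_G$ and are mapped to parallel edges of $M(G)$. The task is to produce a right resolver (equivalently, a choice of lifts playing the role of a road colouring) under which this same-fibre pair, or some pair it forces, becomes stable: from every word read out of their common base vertex in the range, some extension must drive the two lifts to a single vertex of $G$.

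Constructing this stabilizing right resolver is where I expect the main obstacle to lie. The constant out-degree case makes the difficulty vivid. There $M(G)=M_k$, and a strongly connected bunchy graph of constant out-degree is just a single cycle each of whose edges carries multiplicity $k$; since a right resolving factor of an aperiodic graph is again aperiodic (every cycle of $G$ maps to a closed walk of the same length, forcing $\mathrm{per}(H)\mid\mathrm{per}(G)$), the only bunchy synchronizing factor available to an aperiodic constant-out-degree graph is $M_k$ itself. In that setting the key lemma, combined with the induction above, recovers exactly Trahtman's road colouring theorem (an aperiodic constant-out-degree graph synchronizes onto $M_k$). The key lemma is thus a strict generalization of road colouring, and I anticipate that any proof must adapt and extend the stable-pair and merging arguments used there, now carried out relative to the variable out-degrees of $M(G)$ and the canonical collapse $\Sigma_G$ in place of a single fully-connected vertex. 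Overcoming this is the crux on which the whole induction rests.
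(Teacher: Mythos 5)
The statement you are trying to prove is a conjecture: the paper does not prove it, and no proof is known. Your proposal does not close that gap. Your inductive reduction is correct, and in fact it is exactly the strategy the paper itself sketches at the end of Section 3: by MacDonald's quotient theorem together with transitivity of $\leq_S$ (theorem \ref{degMult}), the conjecture reduces to showing that every strongly connected non-bunchy graph admits a right resolver with a nontrivial stability relation. But your ``key lemma'' \emph{is} precisely this open statement, and you offer no proof of it --- only an accurate description of why it is hard. Since the lemma, granted the (routine) reduction, is equivalent to the full conjecture and strictly generalizes Trahtman's road colouring theorem, what you have written is a correct reformulation rather than a proof with a fixable gap: essentially all of the mathematical content remains to be supplied.

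For calibration, the paper's actual theorems are two instances of your key lemma, each requiring a substantive construction. Proposition \ref{wabStab} proves it for weakly almost bunchy graphs by building a right resolver onto $M(G)$ whose edge liftings agree off a single distinguished vertex and then invoking the minimal-image stability criterion (proposition \ref{minImgStab}); proposition \ref{BRStab} proves it for bi-resolving graphs by transposing two edge labels of a bi-resolver and analyzing maximal synchronized sets, following Kari. Note also that in both cases the paper must verify that the relevant hypothesis survives passage to the stability quotient (proposition \ref{wabRRClose}, and the second half of proposition \ref{BRStab}); your general induction avoids this closure issue only because your lemma is asserted for all strongly connected graphs at once. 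Your surrounding remarks are sound --- the equivalence between proper synchronizing factors and nontrivial stability relations, the preservation of aperiodicity under right resolving factors, and the identification of bunchy constant-out-degree graphs with multiplicity-$k$ cycles, so that your lemma recovers the road colouring theorem --- but the crux you correctly flag at the end is not an implementation detail: it is the conjecture itself.
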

Note that, while the bunchy factor conjecture implies the $O(G)$ conjecture, if an individual graph $G$ has a bunchy synchronizing factor, it does not follow by theorem \ref{bunchy} (\ref{5.9}) that $G$ has a unique minimal synchronizing factor.

By theorem \ref{bunchy} (\ref{5.10.2}) and theorem \ref{degMult}, a graph $G$ has a bunchy synchronizing factor if and only if $B(G)\leq_S G$. The bunchy factor conjecture can therefore be equivalently stated as follows.
\begin{conjecture}[Bunchy factor conjecture, version 2]
\label{BFC}
For all strongly connected graphs $G$, $B(G)\leq_S G$.
\end{conjecture}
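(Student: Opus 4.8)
The plan is to induct on $|V(G)|$, peeling off one synchronizing step via the stability quotient at each stage. The statement to prove is that there is a synchronizing right resolver $G\to B(G)$. For the base case, suppose $G$ is bunchy; then $G$ is itself a bunchy right resolving factor of $G$, so by the $\leq_R$-maximality in theorem \ref{bunchy} we have $B(G)\cong G$, and the identity map is a synchronizing right resolver because its fibers are singletons. This settles the base case and also reveals that ``making progress'' means driving $G$ toward a bunchy graph.

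For the inductive step, fix a right resolver $\Phi:G\to B(G)$ (one exists since $B(G)\leq_R G$) and form the stability quotient $K=G/\sim_\Phi$ with quotient map $\Psi:G\to K$. By MacDonald's quotient theorem, $\Psi$ is a synchronizing right resolver and $\Phi=\Delta\circ\Psi$ for some right resolver $\Delta:K\to B(G)$, so $B(G)\leq_R K\leq_R G$. Two facts make $K$ a suitable object for recursion. First, $K$ is strongly connected, being a homomorphic image of $G$. Second, $B(K)\cong B(G)$: since $K\leq_R G$, every bunchy factor of $K$ is a bunchy factor of $G$, giving $B(K)\leq_R B(G)$; and $B(G)$ is a bunchy factor of $K$ through $\Delta$, giving $B(G)\leq_R B(K)$. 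Thus, whenever $\sim_\Phi$ is nontrivial, so that $|V(K)|<|V(G)|$, the induction hypothesis supplies a synchronizing right resolver $K\to B(K)\cong B(G)$, and composing it with $\Psi$ yields, by theorem \ref{degMult}, a synchronizing right resolver $G\to B(G)$.

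The induction therefore closes as soon as one establishes the following claim, which I expect to be the main obstacle: \emph{if $G$ is not bunchy, then some right resolver $\Phi:G\to B(G)$ has a nontrivial stability relation $\sim_\Phi$}. (Equivalently, the reduction above shows that the conjecture holds for $G$ precisely when some right resolver $\Phi:G\to B(G)$ has bunchy stability quotient $K$.) Non-bunchiness of $G$ means that for some vertex $I'$ the restriction $\Sigma_G|_{F(I')}$ fails to be injective, so $I'$ has two distinct followers lying in a common fiber of $\Sigma_G$; the difficulty is to upgrade this local collision into a genuinely stable pair, namely two vertices in one fiber of $\partial\Phi$ that are forced to merge along every future. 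There is no formal shortcut here: this is exactly the generalization of the problem of exhibiting a stable pair in Trahtman's proof of the road colouring theorem, where strong connectivity is exploited to drive two states together. The natural attempt is to track a configuration of maximal ``spread'' among the $\Sigma_G$-fibers along long paths dictated by the strongly connected structure of $M(G)$, arguing that this spread must strictly decrease unless a stable pair is already present.

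Finally, I would instantiate this scheme on the two graph classes that are the focus of the paper. For graphs meeting those structural restrictions the obstacle above should become tractable: one either reads off a stable pair directly from the constrained fiber structure, or verifies outright that the quotient $K$ is bunchy, thereby closing the induction and establishing $B(G)\leq_S G$ throughout the class.
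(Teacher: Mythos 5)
The statement you are proving is a conjecture, and your proposal does not in fact prove it: everything before your italicized claim is a correct but already-known reduction, essentially identical to the inductive strategy the paper itself lays out at the end of Section 3 (quotient by a nontrivial stability relation, note $|V(G/\sim_\Phi)|<|V(G)|$, induct, and recover a synchronizer onto $B(G)$ via theorem \ref{bunchy} and theorem \ref{degMult}). Your bookkeeping within that reduction is fine — the base case $B(G)\cong G$ for bunchy $G$, strong connectedness of the quotient, and the identification $B(K)\cong B(G)$ all check out, the last one because theorem \ref{bunchy} part 2 makes $B(\cdot)$ a maximum, not merely a maximal element, so $\Delta:K\to B(G)$ gives $B(G)\leq_R B(K)$ and $K\leq_R G$ gives $B(K)\leq_R B(G)$. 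But the claim you defer — \emph{if $G$ is not bunchy, then some right resolver $\Phi:G\to B(G)$ has nontrivial $\sim_\Phi$} — is not a technical obstacle to be cleaned up later; by your own reduction it is equivalent to the conjecture itself, which is open. The sketch you offer for it (track a configuration of maximal ``spread'' along long paths and argue it must strictly decrease) does not work as stated: nothing forces the spread to decrease, and synchronized sets can remain at constant size forever, which is exactly what happens for bi-resolving colourings, where \emph{no} pair is synchronizable. Even in the constant out-degree case this claim is the entire content of Trahtman's theorem, whose proof requires a delicate stable-pair construction, not a monotonicity argument.

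Your final paragraph also underestimates what is needed to instantiate the scheme on the paper's two classes. First, the induction must stay inside the class, so you need a closure statement: the paper proves that weakly almost bunchy graphs are closed under right resolvers (proposition \ref{wabRRClose}), and for bi-resolving graphs it proves the ``moreover'' clause of proposition \ref{BRStab}, that the quotient again carries a bi-resolver onto $B(G)$ — neither is automatic, and the paper notes the bi-resolving class is \emph{not} closed under general right resolvers. Second, producing the stable pair is where the real work lies in each case: for weakly almost bunchy graphs the paper constructs a carefully chosen $\Phi$ and applies a Trahtman-style criterion via minimal images (propositions \ref{minImg}, \ref{minImgStab}, \ref{wabStab}); for bi-resolving graphs it adapts Kari's argument, swapping two edge labels and analyzing maximal synchronized sets (propositions \ref{injIfSurj}--\ref{BRStab}). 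You would need to supply arguments at this level of specificity; ``one either reads off a stable pair directly\dots or verifies outright that the quotient is bunchy'' does not do so. In short: correct framing, genuine gap at the one step that matters.
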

The bunchy factor conjecture was first posed by MacDonald in \cite{Mac}, where two more equivalent versions are also given.

It is not difficult to see that the only bunchy right resolving factor of a strongly connected aperiodic graph with constant out-degree $k$ is $M_k$. Thus $B(G)=M_k$, and so the statement of conjecture \ref{BFC} reduces to Trahtman's road colouring theorem when restricted to these graphs. The bunchy factor conjecture might also be provable using an inductive strategy similar to the one employed by Trahtman: by assuming that a strongly connected graph $G$ is not bunchy, try to find a right resolver $\Phi$ on $G$ with non-trivial stability congruence. We would then have $G/\sim_\Phi\leq_S G$ with $|V(G/\sim_\Phi)|<V(G)$. By induction on number of states, we get a bunchy graph $B$ such that $B\leq_S G/\sim_\Phi$ and therefore $B\leq_S G$. This would prove the first version of the bunchy factor conjecture. The next two sections prove the bunchy factor conjecture for two classes of graphs using a variation of this strategy.
\section{Proof of both conjectures for weakly almost bunchy graphs}
We now prove the $O(G)$ and bunchy factor conjectures for the class of weakly almost bunchy graphs. The proof mirrors the proof of the almost bunchy case given in \cite{Mac}. As in \cite{Mac}, we use the notion of a minimal image to find a non-trivial stability relation. 
\begin{definition}[Minimal image]
Let $G$ and $H$ be strongly connected graphs with a right resolver $\Phi: G\to H$. A minimal image is a set $U=\partial\Phi^{-1}(I)\cdot_\Phi u$ for some $I\in V(G)$ and $u\in L_I(H)$, such that for all $v\in L_{t(u)}(H)$, $|U\cdot_\Phi v|=|U|$.
\end{definition}
The following properties of minimal images are contained in \cite{Mac}, but for completeness we give a proof.
\begin{proposition}
\label{minImg}
Let $G$ and $H$ be strongly connected graphs with a right resolver $\Phi:G\to H$. Every minimal image of $\Phi$ is of the same size, and every vertex in $G$ is contained in some minimal image.
\end{proposition}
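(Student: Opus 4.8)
The plan is to view the lifting operation $\cdot_\Phi$ as a family of transformations between the fibers of $\partial\Phi$ and to identify the common size of minimal images with a single global minimum. First I would record the basic monotonicity: for a subset $S\subseteq\partial\Phi^{-1}(I)$ and a path $w\in L_I(H)$, the set $S\cdot_\Phi w$ lies in the single fiber $\partial\Phi^{-1}(t(w))$, and since each vertex of $S$ lifts $w$ to exactly one terminal vertex we have $|S\cdot_\Phi w|\le|S|$ — lifting never increases cardinality. In particular any image $\partial\Phi^{-1}(I)\cdot_\Phi u$ sits in $\partial\Phi^{-1}(t(u))$ and can only shrink under further lifting. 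I would then set
\[
m=\min\{\,|\partial\Phi^{-1}(I)\cdot_\Phi w|:I\in V(H),\ w\in L_I(H)\,\},
\]
a minimum that is attained because the image sizes are positive integers bounded by $|V(G)|$. A set $W=\partial\Phi^{-1}(I_0)\cdot_\Phi w_0$ attaining this minimum is automatically a minimal image: for every $v\in L_{t(w_0)}(H)$ we have $m\le|W\cdot_\Phi v|\le|W|=m$, so $|W\cdot_\Phi v|=|W|$. Thus minimal images exist and have size $m$, and the first claim amounts to showing that \emph{every} minimal image has size exactly $m$.

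For the first claim I would take an arbitrary minimal image $U=\partial\Phi^{-1}(I)\cdot_\Phi u$, so $U\subseteq\partial\Phi^{-1}(t(u))$, alongside the minimizer $W=\partial\Phi^{-1}(I_0)\cdot_\Phi w_0$ of size $m$. Using strong connectivity of $H$, choose a path $p$ from $t(u)$ to $I_0$. Then $U\cdot_\Phi p\subseteq\partial\Phi^{-1}(I_0)$, and applying $w_0$ gives $(U\cdot_\Phi p)\cdot_\Phi w_0\subseteq\partial\Phi^{-1}(I_0)\cdot_\Phi w_0=W$, whence $|U\cdot_\Phi pw_0|\le|W|=m$. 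On the other hand minimality of $U$ forces $|U\cdot_\Phi pw_0|=|U|$, so $|U|\le m$; and $|U|\ge m$ by the definition of $m$. Therefore $|U|=m$. The crux here is the monotonicity observation that the lift of a subset of a fiber is contained in the lift of the whole fiber, which is exactly what lets me compare a generic minimal $U$ against the global minimizer $W$.

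For the second claim I would fix $J\in V(G)$ and again take a minimal image $W=\partial\Phi^{-1}(I_0)\cdot_\Phi w_0$, choosing some vertex $K\in W\subseteq\partial\Phi^{-1}(t(w_0))$. Since $G$ is strongly connected there is a path $\pi\in L_K(G)$ with $t(\pi)=J$; set $v=\Phi(\pi)$, a path in $L_{t(w_0)}(H)$. By uniqueness of lifts, $\pi$ is the lift of $v$ from $K$, so $K\cdot_\Phi v=J$ and hence $J\in W\cdot_\Phi v=\partial\Phi^{-1}(I_0)\cdot_\Phi w_0v$. It then remains to verify that $W\cdot_\Phi v$ is a minimal image: the squeeze $m\le|W\cdot_\Phi v|\le|W|=m$ gives $|W\cdot_\Phi v|=m$, and for any $v'\in L_{t(v)}(H)$ the same squeeze gives $|(W\cdot_\Phi v)\cdot_\Phi v'|=m=|W\cdot_\Phi v|$. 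Thus $W\cdot_\Phi v$ is a minimal image containing $J$.

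Every step is short once this framing is in place, and I expect the only genuine obstacle to be the first claim — specifically the realization that a minimal image cannot stabilize at some intermediate ``locally minimal'' size strictly above $m$. Strong connectivity of $H$ combined with the subset-under-lifting inequality is precisely what excludes such intermediate values, and the very same squeeze argument is then reused to confirm that the image built in the second claim is again minimal.
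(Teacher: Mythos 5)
Your proof is correct and takes essentially the same approach as the paper: both arguments use strong connectedness of $H$ to map one minimal image into a subset of another via a connecting path, then invoke minimality to force $|U\cdot_\Phi pw_0|=|U|$ and squeeze the sizes, and your treatment of the second claim (lifting a path $\Phi(\pi)$ of $G$ through a vertex of a minimal image) is virtually identical to the paper's. The only cosmetic difference is that you compare each minimal image against a global minimizer $m$ (which also makes existence explicit), whereas the paper runs the same containment argument symmetrically between two arbitrary minimal images.
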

\begin{proof}
For the first assertion, let $U_1=\partial\Phi^{-1}(I_1)\cdot_\Phi u_1$ and $U_2= \partial\Phi^{-1}(I_2)\cdot_\Phi u_2$ be minimal images, where $I_i\in V(H)$ and $u_i\in L_{I_i}(H)$. By strong connectedness, there is a path $w$ from $t(u_1)$ to $I_2$ in $H$. Then $U_1\cdot_\Phi wu_2\subset U_2$, and by minimality of $U_1$, we have $|U_1\cdot_\Phi wu_2|=|U_1|$. Hence $|U_1|\leq|U_2|$. Similarly, $|U_2|\leq|U_1|$, so $|U_1|=|U_2|$.

For the second assertion, let $I'\in V(G)$ be arbitrary, let $U$ be a minimal image, and let $J'\in U$. By strong connectedness, there is a path $w$ from $J'$ to $I'$ in $G$. Then $U\cdot_\Phi \Phi(w)$ is a minimal image with $I'=J'\cdot_\Phi \Phi(w)\in U\cdot_\Phi \Phi(w)$.
\end{proof}
We will use the following sufficient condition for stability to find a right resolver with a non-trivial stability relation. This result is contained in \cite{Mac}, but was motivated by a similar result in the road colouring context due to Trahtman.
\begin{proposition}
\label{minImgStab}
Let $G$ and $H$ be strongly connected graphs with a right resolver $\Phi:G\to H$. If there are minimal images $U_1, U_2\subset\partial\Phi^{-1}(I)$ for some $I\in H$ such that $U_1\Delta U_2=\{J_1,J_2\}$, then $J_1\sim_\Phi J_2$.
\end{proposition}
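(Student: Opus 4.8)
The plan is to verify the relation $J_1\sim_\Phi J_2$ straight from the definition of $\sim_\Phi$. Since $|U_1|=|U_2|$ by Proposition \ref{minImg}, the hypothesis $U_1\Delta U_2=\{J_1,J_2\}$ means that, after relabelling, $U_1\setminus U_2=\{J_1\}$ and $U_2\setminus U_1=\{J_2\}$, with both $J_1,J_2\in\partial\Phi^{-1}(I)$. I must produce, for an arbitrary $u\in L_I(H)$, a $v\in L_{t(u)}(H)$ with $J_1\cdot_\Phi uv=J_2\cdot_\Phi uv$. Fix $u$. If $J_1\cdot_\Phi u=J_2\cdot_\Phi u$ then $J_1\cdot_\Phi uv=J_2\cdot_\Phi uv$ for every $v$ and we are done, so assume $J_1\cdot_\Phi u\neq J_2\cdot_\Phi u$. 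The first step is to note that pushing the whole configuration forward along $u$ returns us to the original situation over the vertex $t(u)$: because $U_1$ and $U_2$ are images of full fibers, so are $U_1\cdot_\Phi u$ and $U_2\cdot_\Phi u$, and since lifting cannot drop a set below the common minimal-image size these are again minimal images and $\cdot_\Phi u$ is injective on each of $U_1,U_2$. A short membership check, using this injectivity on $U_1\cap U_2$ and on the two distinguished vertices, then gives $(U_1\cdot_\Phi u)\Delta(U_2\cdot_\Phi u)=\{J_1\cdot_\Phi u,\,J_2\cdot_\Phi u\}$. Hence it suffices to treat the case of $u$ trivial, i.e.\ to find a single path realizing the merge.

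The heart of the argument is an absorption lemma: for any minimal image $U$ based at a vertex $K$ and any $p\in\partial\Phi^{-1}(K)$, there is a $w\in L_K(H)$ with $p\cdot_\Phi w\in U\cdot_\Phi w$. To prove it, choose $w$ minimizing $|\partial\Phi^{-1}(K)\cdot_\Phi w|$; since applying a lift never increases cardinality, $\partial\Phi^{-1}(K)\cdot_\Phi w$ is then a minimal image, so by Proposition \ref{minImg} it has the common minimal-image size, which equals $|U\cdot_\Phi w|$. As $U\cdot_\Phi w\subseteq\partial\Phi^{-1}(K)\cdot_\Phi w$ and the two sets have the same size, they coincide; since $p\cdot_\Phi w$ lies in the larger one, it lies in $U\cdot_\Phi w$.

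To finish, I apply the lemma with $K=I$, $U=U_1$, and $p=J_2$ (which is not in $U_1$), obtaining $w$ and an element $a\in U_1$ with $a\cdot_\Phi w=J_2\cdot_\Phi w$. If $a=J_1$ then $v=w$ realizes the merge. Otherwise $a\in U_1\setminus\{J_1\}=U_1\cap U_2$, so $a$ and $J_2$ both lie in the minimal image $U_2$; but $\cdot_\Phi w$ is injective on $U_2$, forcing $a=J_2$, contradicting $a\in U_1\not\ni J_2$. Thus $a=J_1$, and $J_1\sim_\Phi J_2$.

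I expect the main obstacle to be the absorption lemma, and in particular the observation that every full fiber $\partial\Phi^{-1}(K)$ can be contracted to the common minimal-image size — this is what lets a minimal image completely fill a maximally contracted fiber and thereby swallow the stray vertex $p$. The subsequent case analysis is short but is the decisive point, since injectivity of $\cdot_\Phi w$ on $U_2$ is exactly what rules out $J_2$ being absorbed by a vertex of the common part $U_1\cap U_2$ and pins the absorbing preimage down to $J_1$.
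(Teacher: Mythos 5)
Your proof is correct, but note that the paper does not actually prove this proposition: it is stated with only a citation to \cite{Mac} (``This result is contained in \cite{Mac}\dots''), so there is no in-paper argument to compare against. On its own merits your argument is sound and complete. The push-forward step works: minimality of $U_i$ gives $|U_i\cdot_\Phi uv|=|U_i|$ for all continuations, so $U_i\cdot_\Phi u$ is again a minimal image and $\cdot_\Phi u$ is injective on each $U_i$, which is exactly what prevents $J_i\cdot_\Phi u$ from falling into $(U_1\cap U_2)\cdot_\Phi u$ and so preserves the symmetric difference $\{J_1\cdot_\Phi u,\, J_2\cdot_\Phi u\}$. Your absorption lemma is the standard fiber-contraction device behind Kari's and Trahtman's synchronization arguments (and behind the proof in \cite{Mac}): choosing $w$ to minimize $|\partial\Phi^{-1}(K)\cdot_\Phi w|$ yields a minimal image, which by Proposition \ref{minImg} has the common size $m=|U\cdot_\Phi w|$, forcing $U\cdot_\Phi w=\partial\Phi^{-1}(K)\cdot_\Phi w$. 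One streamlining worth noting: applying that observation to both $U_1$ and $U_2$ with the same minimizing $w$ gives $U_1\cdot_\Phi w=\partial\Phi^{-1}(K)\cdot_\Phi w=U_2\cdot_\Phi w$ outright, and then injectivity of $\cdot_\Phi w$ on each $U_i$ pins down $J_1\cdot_\Phi w=J_2\cdot_\Phi w$ directly, since $(U_1\cap U_2)\cdot_\Phi w$ omits exactly one element of the common image from each side; this eliminates your case analysis on the absorbing vertex $a$ entirely. So your route is the expected one for this statement, executed correctly, with only a small detour at the end.
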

We now show that we can find a non-trivial stability congruence for weakly almost bunchy graphs that are not bunchy, and then proceed by induction.
\begin{proposition}
\label{wabStab}
Let $G$ be a strongly connected weakly almost bunchy graph. If $G$ is not bunchy, then there is a right resolver $\Phi:G\to M(G)$ such that $\sim_\Phi$ is non-trivial.
\end{proposition}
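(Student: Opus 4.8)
The plan is to build a right resolver $\Phi:G\to M(G)$ for which two minimal images in a common fibre differ in exactly two vertices, and then to invoke Proposition~\ref{minImgStab}. The crucial freedom is that Theorem~\ref{M(G)} fixes only the vertex map $\partial\Phi=\Sigma_G$: the edge map may be chosen independently at each vertex, as any family of bijections $E_{I'}(G)\to E_{\Sigma_G(I')}(M(G))$ respecting target fibres. I will exploit this to prescribe, for a single pair of $M(G)$-edges, exactly which vertices of a fibre separate them.

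First I would extract a witness to non-bunchiness. Since $\Sigma_G|_{F(I')}$ fails to be injective for some $I'$, there are distinct followers $K_1',K_2'\in F(I')$ with $\Sigma_G(K_1')=\Sigma_G(K_2')=J$, and the two corresponding out-edges of $I'$ have distinct images in $E_I^J(M(G))$, where $I=\Sigma_G(I')$; hence $m:=|E_I^J(M(G))|\ge 2$. Thus $(I,J)$ is a pair to which the weakly almost bunchy hypothesis applies, and $I'$ has at least two distinct followers in $\Sigma_G^{-1}(J)$.

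Next I would choose the witness carefully; this is the key step. Weakly almost bunchy says at most one vertex of $\Sigma_G^{-1}(I)$ is \emph{fully spread} over $(I,J)$, meaning it has $m$ distinct followers in $\Sigma_G^{-1}(J)$. If such a fully spread vertex $I_*'$ exists I take $I_0'=I_*'$ (as $m\ge 2$, this $I_*'$ is itself a witness); otherwise I take $I_0'=I'$. In either case $I_0'$ has at least two distinct followers in $\Sigma_G^{-1}(J)$, while every \emph{other} vertex $P$ of $\Sigma_G^{-1}(I)$ has a repeated follower in $\Sigma_G^{-1}(J)$, i.e.\ is not fully spread. Now fix distinct edges $a,b\in E_I^J(M(G))$ and build $\Phi$ so that at $I_0'$ the edges $a,b$ lift to out-edges with distinct terminal vertices $K_1',K_2'$, while at every other $P\in\Sigma_G^{-1}(I)$ the edges $a,b$ lift to two out-edges of $P$ sharing a terminal vertex (possible precisely because $P$ is not fully spread). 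These prescriptions are independent across vertices, so they extend to a right resolver $\Phi:G\to M(G)$.

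Finally, using Proposition~\ref{minImg} I would take a minimal image $W\subset\Sigma_G^{-1}(I)$ containing $I_0'$. By construction $P\cdot_\Phi a=P\cdot_\Phi b$ for every $P\in W\setminus\{I_0'\}$, whereas $I_0'\cdot_\Phi a=K_1'\ne K_2'=I_0'\cdot_\Phi b$. Since $W$ is a minimal image, $W\cdot_\Phi a$ and $W\cdot_\Phi b$ are minimal images of size $|W|$ in $\Sigma_G^{-1}(J)$; they share the $(|W|-1)$-element set $\{P\cdot_\Phi a:P\in W\setminus\{I_0'\}\}$ and are completed by $K_1'$ and $K_2'$ respectively, so their symmetric difference is exactly $\{K_1',K_2'\}$. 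Proposition~\ref{minImgStab} then yields $K_1'\sim_\Phi K_2'$, so $\sim_\Phi$ is non-trivial. The only delicate point is the choice of witness in the third step: naively separating a pair at an arbitrary witness can fail when a \emph{different} vertex of the fibre is fully spread (and therefore separates every pair), and the role of the weakly almost bunchy hypothesis is exactly to guarantee at most one such vertex and to let us promote it to be the witness.
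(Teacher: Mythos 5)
Your proof is correct and takes essentially the same route as the paper's: the same promotion of the (at most one) ``fully spread'' vertex to serve as the witness $I_0'$, the same right resolver collapsing the lifts of two parallel edges of $E_I^J(M(G))$ at every other vertex of the fibre, and the same application of Propositions~\ref{minImg} and~\ref{minImgStab} to a minimal image containing $I_0'$. The delicate point you flag---that an unpromoted fully spread vertex would break the construction---is exactly the subtlety the paper's proof handles with its choice of $I_0'$.
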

\begin{proof}
Since $G$ is not bunchy, there are states $I, J\in V(M(G))$ such that there is a vertex $I_0'\in \Sigma_G^{-1}(I)$ with distinct follower states $J_1',J_2'\in\Sigma_G^{-1}(J)$ and edges $e_i\in E_{I_0'}^{J_i'}(G)$. If there is a state $I_1'\in\Sigma_G^{-1}(I)$ such that $|\Sigma_G^{-1}(J)\cap F(I_1')|=|E_I^J(M(G))|$, choose $I_0'=I_1'$. By weakly almost bunchiness, for each $I'\in \Sigma_G^{-1}(I)\setminus\{I_0'\}$, there is a pair of distinct edges $f_{I',1},f_{I',2}\in E_{I'}(G)$ with $t(f_{I',1})=t(f_{I',2})\in\Sigma_G^{-1}(J)$. Define a right-resolver $\Phi:G\to M(G)$ by setting $\Phi(f_{I',i})=\Phi(e_i)$ for all $I'\in \Sigma_G^{-1}(I)\setminus\{I_0'\}$, and making $\Phi$ otherwise arbitrary. By proposition \ref{minImg}, there is a minimal image $U\subset \Sigma_G^{-1}(I)$ that contains $I_0'$. Let $U_0=U\setminus\{I_0'\}$. Then $U_0\cdot_\Phi\Phi(e_1)=U_0\cdot_\Phi\Phi(e_2)$ since $I'\cdot_\Phi \Phi(e_1)=I'\cdot_\Phi \Phi(e_2)$ for all $I'\in \Sigma_G^{-1}(I)\setminus \{I_0\}$. By minimality of $U$, $J_i'\notin U_0\cdot_\Phi\Phi(e_i)$, so $(U\cdot_\Phi \Phi(e_i))\Delta( U\cdot_\Phi\Phi(e_1))=\{J_1',J_2'\}$. Therefore $J_1'\sim_\Phi J_2'$ by proposition \ref{minImgStab}.
\end{proof}

The following closure property of weakly almost bunchy graphs is used in the induction step. It is also used to prove the $O(G)$ conjecture for weakly almost bunchy graphs as a corollary of the bunchy factor conjecture. 
\begin{proposition}
\label{wabRRClose}
The class of weakly almost bunchy graphs is closed under right resolvers.
\end{proposition}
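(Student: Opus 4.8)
The plan is to first show that the minimal factor is an invariant of right resolution, and then to transfer the defining condition of weak almost bunchiness from $G$ down to its factor. So suppose $\Phi:G\to G'$ is a right resolver and $G$ is weakly almost bunchy. The first step is to prove that $M(G)\cong M(G')$ and that the vertex maps fit into a commuting triangle $\partial\Sigma_G=\partial\Sigma_{G'}\circ\partial\Phi$. For the isomorphism, I would observe that $M(G')\leq_R G'\leq_R G$, so $M(G')$ is a right resolving factor of $G$ and hence $M(G)\leq_R M(G')$ by minimality of $M(G)$. Conversely, applying Theorem \ref{M(G)} to $M(G')$ shows $M(M(G'))\cong M(G')$, so $M(G')$ has no proper right resolving factors; since $M(G)\leq_R M(G')$, this forces $M(G)\cong M(G')$. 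Writing $M=M(G)=M(G')$, the composite $\Sigma_{G'}\circ\Phi:G\to M$ is a right resolver, so by the uniqueness clause of Theorem \ref{M(G)} its vertex map equals $\partial\Sigma_G$, which gives the commuting triangle.

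With $M$ and the triangle in hand, the second step is the transfer argument. To show $G'$ is weakly almost bunchy, I would fix $(I,J)\in V(M)\times V(M)$ with $m:=|E_I^J(M)|\geq 2$ and suppose, for contradiction, that there are two distinct states $I_1'',I_2''\in\Sigma_{G'}^{-1}(I)$ with $|\Sigma_{G'}^{-1}(J)\cap F(I_k'')|=m$ for $k=1,2$. Using surjectivity of $\Phi$, I would choose lifts $I_k'\in\partial\Phi^{-1}(I_k'')$; the commuting triangle gives $\partial\Sigma_G(I_k')=I$, and the lifts are distinct because their $\Phi$-images are. The heart of the argument is to pin down $|\Sigma_G^{-1}(J)\cap F(I_k')|$ exactly. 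An upper bound of $m$ comes from $\Sigma_G$ being right resolving: the edges from $I_k'$ into $\Sigma_G^{-1}(J)$ are in bijection with $E_I^J(M)$, so there are exactly $m$ of them and hence at most $m$ distinct terminal vertices. A matching lower bound comes from $\Phi$ being a surjective right resolver: $\Phi$ restricts to a bijection on the outgoing edges of $I_k'$ and carries an edge into $\Sigma_G^{-1}(J)$ exactly when its image lands in $\Sigma_{G'}^{-1}(J)$ (again by the triangle), so it maps the relevant follower states of $I_k'$ onto the $m$ follower states of $I_k''$ in $\Sigma_{G'}^{-1}(J)$. Together these force $|\Sigma_G^{-1}(J)\cap F(I_k')|=m$ for both $k$, producing two distinct states of $\Sigma_G^{-1}(I)$ that witness $m$ followers in $\Sigma_G^{-1}(J)$, contradicting the weak almost bunchiness of $G$. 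Hence at most one such $I''$ exists, and $G'$ is weakly almost bunchy.

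I expect the main obstacle to be the first step, establishing $M(G)\cong M(G')$ together with the commuting triangle, since this is exactly what lets the same pair $(I,J)$ and the same value $m=|E_I^J(M)|$ govern the condition on both graphs; without a common target $M$ there is no canonical place to compare follower counts. The transfer argument itself is then a careful double counting, and the one point requiring attention is that the two bounds must be combined to give equality (not merely $\geq m$), since weak almost bunchiness is stated in terms of a follower count \emph{equal} to $|E_I^J(M)|$ rather than at least that many.
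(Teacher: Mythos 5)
Your proposal is correct and follows essentially the same route as the paper: identify $M(G)\cong M(G')$ with the commuting vertex maps $\Sigma_G=\Sigma_{G'}\circ\partial\Phi$ (the paper gets this directly from Theorem \ref{M(G)} and surjectivity of $\partial\Phi$), then lift a witness state through $\Phi$ and use the right-resolving property to transfer follower counts, yielding $|\Sigma_{G'}^{-1}(J)\cap F(I'')|\leq|\Sigma_G^{-1}(J)\cap F(I')|$ for the lift $I'$. Your explicit observation that the count $|\Sigma_G^{-1}(J)\cap F(I')|$ is automatically bounded above by $|E_I^J(M)|$, so the inequality forces exact equality as required by the definition, is a point the paper's proof leaves implicit, and making it explicit is a genuine (if minor) improvement in completeness rather than a different approach.
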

\begin{proof}
Let $G$ be weakly almost bunchy and let $\Phi:G\to H$ be a right-resolver. Let $I,J\in V(M)$ be such that $E_I^J(M(G))\geq 2$. Let $I'\in \Sigma_H^{-1}(I)$. By theorem \ref{M(G)} and the surjectivity of $\partial\Phi$, there is an $I''\in\Sigma^{-1}_G(I)$ such that $\partial\Phi(I'')=I'$. Let $J'\in \Sigma_H^{-1}(J)\cap F(I')$ and $e\in E_{I'}^{J'}$. Since $\Phi$ is right resolving, there is an $a\in E_{I''}(G)$ such that $\Phi(a)=e$. Then $t(a)\in \Sigma_G^{-1}(J)\cap F(I'')$ such that $\partial\Phi(t(a))=J'$. Hence $|\Sigma^{-1}_H(J)\cap F(I')|\leq|\Sigma^{-1}_G(J)\cap F(I'')|$. Since $G$ is weakly almost bunchy, it follows that $H$ is also weakly almost bunchy.

\end{proof}
The following two propositions show that the bunchy factor conjecture and the $O(G)$ conjecture are true for weakly almost bunchy graphs.
\begin{proposition}
\label{wabBF}
For all strongly connected weakly almost bunchy graphs $G$, $B(G)\leq_S G$.
\end{proposition}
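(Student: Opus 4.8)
The plan is to induct on the number of vertices $|V(G)|$, assembling the machinery from the preceding propositions. For the base case, if $G$ is itself bunchy then $G$ is its own $\leq_R$-maximal bunchy right resolving factor, so $B(G)\cong G$, and $B(G)\leq_S G$ holds trivially: the identity resolver $G\to G$ has singleton fibers and hence trivial stability relation, so it is synchronizing. I may therefore assume $G$ is not bunchy.

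For the inductive step, I would first invoke Proposition \ref{wabStab} to obtain a right resolver $\Phi:G\to M(G)$ whose stability relation $\sim_\Phi$ is non-trivial. Letting $\Psi:G\to G/\sim_\Phi$ be the quotient map, MacDonald's theorem on stability quotients guarantees that $\Psi$ is a synchronizing right resolver, so $G/\sim_\Phi\leq_S G$. Because $\sim_\Phi$ is non-trivial, $G/\sim_\Phi$ has strictly fewer vertices than $G$; because $\Psi$ is a surjective homomorphism out of a strongly connected graph, $G/\sim_\Phi$ is again strongly connected; and because $G/\sim_\Phi$ is a right resolving factor of $G$, Proposition \ref{wabRRClose} ensures it is again weakly almost bunchy. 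Thus $G/\sim_\Phi$ satisfies all the hypotheses on a smaller vertex set, and the induction hypothesis yields $B(G/\sim_\Phi)\leq_S G/\sim_\Phi$.

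Next I would compose. The induction hypothesis supplies a synchronizing right resolver $G/\sim_\Phi\to B(G/\sim_\Phi)$, and composing it with the synchronizing resolver $\Psi$ gives a right resolver $G\to B(G/\sim_\Phi)$ that is synchronizing by Theorem \ref{degMult}. Since $B(G/\sim_\Phi)$ is bunchy, this exhibits a bunchy synchronizing factor of $G$. To pass from this to the sharper conclusion $B(G)\leq_S G$, I would invoke the equivalence already recorded in the discussion of Conjecture \ref{BFC}: given a synchronizing resolver $G\to H$ with $H$ bunchy, Theorem \ref{bunchy} (\ref{5.10.2}) factors it through $B(G)$ as a composite $\Delta\circ\Psi'$ with $\Psi':G\to B(G)$, and Theorem \ref{degMult} then forces $\Psi'$ to be synchronizing. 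This gives $B(G)\leq_S G$ and closes the induction.

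I expect the step requiring the most care to be the verification that the inductive hypothesis genuinely applies to $G/\sim_\Phi$ — confirming it is strongly connected and still weakly almost bunchy — together with the final translation landing on $B(G)$ specifically rather than on some arbitrary bunchy synchronizing factor. The substantive combinatorial difficulty of manufacturing a non-trivial stability relation has already been isolated in Proposition \ref{wabStab}, so the present argument is essentially the assembly of that proposition with the closure property of Proposition \ref{wabRRClose} and the quotient and composition theorems.
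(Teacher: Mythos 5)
Your proposal is correct and takes essentially the same route as the paper's proof: induction on $|V(G)|$, Proposition \ref{wabStab} to produce a non-trivial stability quotient, Proposition \ref{wabRRClose} to keep the quotient in the weakly almost bunchy class, and Theorem \ref{bunchy} (\ref{5.10.2}) together with Theorem \ref{degMult} to factor the resulting bunchy synchronizing factor through $B(G)$. Your extra verifications (strong connectedness of $G/\sim_\Phi$ and the handling of the bunchy case via the identity resolver) are sound details the paper leaves implicit.
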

\begin{proof}
If $|V(G)|=1$, the claim clearly holds. Suppose $|V(G)|=n>1$ and that the claim holds for graphs with fewer than $n$ vertices. If $G$ is bunchy, then $B(G)=G$ and so $B(G)\leq_S G$. Suppose $G$ is non-bunchy. By proposition \ref{wabStab}, there is a right resolver $\Phi:G\to M(G)$ such that $|V(G/\sim_\Phi)|<|V(G)|$. By proposition \ref{wabRRClose}, $G/\sim_\Phi$ is weakly almost bunchy. By the induction hypothesis, we then get $B(G/\sim_\Phi)\leq_S G/\sim_\Phi\leq_S G$. Let $\Psi: G\to B(G/\sim_\Phi)$ be a synchronizing right resolver. By theorem \ref{bunchy} (\ref{5.10.2}), there are right resolvers $\Delta: G\to B(G)$ and $\Theta: B(G)\to B(G/\sim_\Phi)$ such that $\Psi=\Theta\circ\Delta$. By theorem \ref{degMult}, $\Delta$ is synchronizing. Hence $B(G)\leq_S G$.
\end{proof}
\begin{proposition}
Every strongly connected weakly almost bunchy graph has a unique $\leq_S$-minimal synchronizing factor.
\end{proposition}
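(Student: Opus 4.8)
The plan is to show that every $\leq_S$-minimal synchronizing factor of $G$ is bunchy, and then to invoke theorem \ref{bunchy} (\ref{5.9}) to conclude that there is at most one such factor. Existence of a minimal synchronizing factor is immediate: since every right resolver is a surjective graph homomorphism, any $H$ with $H\leq_S G$ satisfies $|V(H)|\leq|V(G)|$ and $|E(H)|\leq|E(G)|$, so the synchronizing factors of $G$ form a finite poset under $\leq_S$ and hence have at least one minimal element.

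For the bunchiness claim, I would let $H$ be an arbitrary $\leq_S$-minimal synchronizing factor of $G$. As a homomorphic image of the strongly connected graph $G$, $H$ is strongly connected, and since $G$ is weakly almost bunchy and there is a right resolver $G\to H$, proposition \ref{wabRRClose} gives that $H$ is weakly almost bunchy. Applying proposition \ref{wabBF} to $H$ then yields $B(H)\leq_S H$, and since $H\leq_S G$ and $\leq_S$ is transitive (theorem \ref{degMult}), we obtain $B(H)\leq_S G$; thus $B(H)$ is itself a synchronizing factor of $G$. Minimality of $H$ together with $B(H)\leq_S H$ forces $B(H)\cong H$, and as $B(H)$ is bunchy by theorem \ref{bunchy} (1), $H$ is bunchy.

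Finally I would assemble the uniqueness argument. Suppose $H_1$ and $H_2$ are $\leq_S$-minimal synchronizing factors of $G$. By the previous step both are bunchy, so both are synchronizing bunchy factors of $G$; and since each is $\leq_S$-minimal among all synchronizing factors of $G$, each is a fortiori $\leq_S$-minimal among the synchronizing bunchy factors of $G$. Theorem \ref{bunchy} (\ref{5.9}) asserts that $G$ has at most one $\leq_S$-minimal synchronizing bunchy factor, whence $H_1\cong H_2$. Together with existence, this produces the unique $\leq_S$-minimal synchronizing factor, which we call $O(G)$.

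I do not expect a serious obstacle here, since the substantive work is already contained in propositions \ref{wabBF} and \ref{wabRRClose}; the main points to verify carefully are that minimality of $H$ among all synchronizing factors transfers to minimality among the bunchy ones, and that $B(H)\leq_S H$ for a minimal $H$ genuinely forces $B(H)\cong H$ rather than a proper factorization. Both follow from the antisymmetry of $\leq_S$ together with the fact that $B(H)$ is by definition a right-resolving factor of $H$, so the remainder is essentially bookkeeping.
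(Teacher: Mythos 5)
Your proposal is correct and follows essentially the same route as the paper: both arguments show that every $\leq_S$-minimal synchronizing factor of $G$ must be bunchy (using proposition \ref{wabRRClose} to transfer weak almost bunchiness to the factor) and then invoke theorem \ref{bunchy} (\ref{5.9}) to conclude uniqueness. The only minor difference is that the paper derives bunchiness of a minimal factor $H$ directly from proposition \ref{wabStab} (a non-bunchy $H$ admits a nontrivial stability quotient, hence is not minimal), whereas you reach the same conclusion by applying proposition \ref{wabBF} to $H$ and using minimality to force $B(H)\cong H$ --- a correct variation that ultimately rests on the same lemma.
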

\begin{proof}
Let $G$ be strongly connected and weakly almost bunchy. Let $H\leq_S G$. By proposition \ref{wabRRClose}, $H$ is weakly almost bunchy. By proposition \ref{wabStab}, if $H$ is not bunchy then $H$ is not $\leq_S$-minimal. Hence every $\leq_S$-minimal sychronizing factor of $G$ is bunchy. Theorem \ref{bunchy} (\ref{5.9}) then shows that $G$ has a unique $\leq_S$-minimal synchronizing factor.
\end{proof}

\section{The bunchy factor conjecture for bi-resolving graphs}
J. Kari \cite{Kari} solved the road problem for Eulerian graphs a few years before Trahtman gave a proof of the general case. We now adapt Kari's proof to prove the bunchy factor conjecture for a class of graphs that generalizes Eulerian graphs. The key property used in Kari's proof is that Eulerian graphs with constant out-degree admit a road colouring for which no pair of states is synchronizable. A right resolver on a strongly connected graph has no synchronizable pair of states if and only if it is bi-resolving. It is therefore natural to generalize Kari's approach to prove the bunchy factor conjecture for graphs $G$ that admit a bi-resolving homomorphism onto $B(G)$. We will refer to these graphs themselves as bi-resolving. 

\begin{definition}
A graph $G$ is bi-resolving if there is a bi-resolving homomorphism $\Phi:G\to B(G)$.
\end{definition}
Note that this is weaker than the condition that $G$ admit a bi-resolver $\Phi:G \to M(G)$. If there is such a bi-resolver, by theorem \ref{bunchy} (\ref{5.10.2}) there are right resolvers $\Psi:G\to B(G)$ and $\Delta:B(G)\to M(G)$ such that $\Phi=\Delta\circ\Psi$. The factors $\Psi$ and $\Delta$ are then also bi-resolving.

The following will be used in proposition \ref{BRStab}. The converse holds similarly, but is not needed.

\begin{proposition}
\label{injIfSurj}
Let $G$ be a strongly connected graph, and let $\Phi:G\to H$ be right resolving. If $\Phi|_{E^I(G)}:E^I(G)\to E^{\partial\Phi(I)}(H)$ is surjective for all $I\in V(G)$, then it is injective for all $I\in V(G)$. Hence $\Phi$ is bi-resolving in this case.
\end{proposition}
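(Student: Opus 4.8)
The plan is to translate the two hypotheses into linear-algebraic identities relating the adjacency matrices of $G$ and $H$, and then extract injectivity from the Perron--Frobenius theory of irreducible nonnegative matrices. First I would record that $H$ is itself strongly connected: since $\Phi$ is a surjective homomorphism, any two vertices of $H$ lift to vertices of $G$ joined by a directed path, whose image is a directed path in $H$. Let $A$ and $B$ be the adjacency matrices of $G$ and $H$ (so that $A_{IJ}=|E_I^J(G)|$, and similarly for $B$), both irreducible by strong connectedness, and let $P$ be the $0/1$ matrix of the vertex map, with $P_{I,K}=1$ exactly when $\partial\Phi(I)=K$ (one $1$ per row).

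Both hypotheses then have clean matrix forms. Right resolving says that for each $I$ the number of edges from $I$ into the fiber $\partial\Phi^{-1}(L)$ equals $B_{\partial\Phi(I),L}$, which is precisely the intertwining $AP=PB$. Dually, surjectivity of $\Phi|_{E^I(G)}$ for every $I$ says that the number of edges from $\partial\Phi^{-1}(L)$ into $I$ is at least $B_{L,\partial\Phi(I)}$, i.e. $P^\top A\ge BP^\top$ entrywise. Injectivity of every $\Phi|_{E^I(G)}$ is exactly the reverse inequality, so the whole proposition reduces to upgrading $P^\top A\ge BP^\top$ to the equality $P^\top A=BP^\top$ (which, with surjectivity, makes each $\Phi|_{E^I(G)}$ bijective and hence $\Phi$ bi-resolving).

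To carry out this upgrade I would first check that the two graphs share a Perron eigenvalue $\lambda$: if $\mathbf v>0$ is a right Perron eigenvector of $B$, then $A(P\mathbf v)=PB\mathbf v=\lambda P\mathbf v$ with $P\mathbf v>0$, so by irreducibility of $A$ this positive eigenvector forces $\lambda$ to be the Perron root of $A$ as well. Now let $\mathbf w>0$ be a left Perron eigenvector of $B$ and set $\mathbf z^\top=\mathbf w^\top P^\top$, which is strictly positive since $z_I=w_{\partial\Phi(I)}$. Contracting $P^\top A\ge BP^\top$ on the left by $\mathbf w^\top$ yields $\mathbf z^\top A\ge \mathbf w^\top BP^\top=\lambda\mathbf z^\top$. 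Pairing this inequality on the right with the positive right Perron eigenvector of $A$ squeezes it into the equality $\mathbf z^\top A=\lambda\mathbf z^\top$, so that $\mathbf w^\top(P^\top A-BP^\top)=0$; since $P^\top A-BP^\top\ge 0$ and $\mathbf w>0$, every entry of that matrix must vanish, giving $P^\top A=BP^\top$ as required.

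The step I expect to be the main obstacle is precisely this passage from inequality to equality, and it is worth emphasising why a naive global count does not suffice. Summing out-degrees and in-degrees over $G$ only gives $\sum_K m_K(\mathrm{outdeg}_H(K)-\mathrm{indeg}_H(K))=\sum_I\delta(I)\ge 0$, where $m_K=|\partial\Phi^{-1}(K)|$ and $\delta(I)=|E^I(G)|-|E^{\partial\Phi(I)}(H)|$; the left-hand side need not vanish, because the fiber sizes $m_K$ need not be proportional to any stationary vector of $B$, so the accumulated slack $\sum_I\delta(I)$ cannot be forced to $0$ by counting alone. Strong connectivity must instead be used in the sharper, eigenvector-weighted form above, and this is the one place where irreducibility is genuinely indispensable.
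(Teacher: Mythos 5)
Your proof is correct, but it takes a genuinely different route from the paper's. The paper argues combinatorially: given an arbitrary edge $e_0$ of $H$, strong connectedness closes it into a cycle $e_0e_1\cdots e_n$; right resolving makes the lift of each $e_k$ a well-defined map $\partial\Phi^{-1}(i(e_k))\to\partial\Phi^{-1}(t(e_k))$, and the surjectivity hypothesis on in-edge restrictions makes each of these maps surjective; composing them around the cycle gives a surjective self-map of the finite set $\partial\Phi^{-1}(i(e_0))$, hence a bijection, which forces each individual edge-lifting map to be a bijection and hence each $\Phi|_{E^I(G)}$ to be injective. Your Perron--Frobenius squeeze reaches the same conclusion, via the equality $P^\top A=BP^\top$, by spectral positivity instead: this is heavier machinery (irreducibility, existence of positive left and right Perron eigenvectors) where the paper needs only the pigeonhole principle applied to fibers, but it buys something in return --- it cleanly isolates the intertwining $AP=PB$ from the inequality $P^\top A\ge BP^\top$, shows along the way that a right resolver forces $G$ and $H$ to share their Perron value, and would survive generalization to weighted graphs or abstract nonnegative-matrix settings where edge-lifting maps are unavailable. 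Your closing remark on why the naive global count fails is apt, and it is worth noticing that the paper's cycle-composition trick is exactly a way of dodging the weighting problem you identify: instead of certifying $\sum_I\delta(I)=0$ by choosing correct weights, it converts local surjectivity into a surjective self-map of a finite set, where finiteness does the work that the left Perron eigenvector does for you. One small wording caution: injectivity of every $\Phi|_{E^I(G)}$ implies $P^\top A\le BP^\top$, but the converse half of your ``exactly'' needs the surjectivity hypothesis (the counts can match while the map doubles up on one parallel edge and misses another); since you do invoke surjectivity at the moment you upgrade the equality to bijectivity, this is a phrasing issue, not a gap.
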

\begin{proof}
Let $e_0\in E(H)$ be arbitrary. By strong connectedness, there is a path $e_1,\dots, e_n$ in $H$ from $t(e_0)$ to $i(e_0)$. Since $\Phi$ is right resolving and $\Phi|_{E^I(G)}$ is surjective for all $I\in\partial\Phi^{-1}(t(e_k))$, the lifting of each edge $e_k$ defines a surjection from $\partial\Phi^{-1}(i(e_k))$ to $\partial\Phi^{-1}(t(e_k))$. The composition of these surjections is then a surjection from $\partial\Phi^{-1}(i(e_0))$ to itself, and is therefore a bijection. The lifting of each edge $e_k$ must then define a bijection, so no two preimages of $e_0$ can have the same terminal vertex. Hence $\Phi|_{E^I(G)}$ is injective for all $I\in V(G)$. 
\end{proof}

We now aim to find, for any strongly connected bi-resolving graph $G$, a right resolver $\Phi:G\to B(G)$ with a non-trivial stability relation. As in Kari's proof, we use the notion of a maximal synchronized set.
\begin{definition}[Synchronized and maximal synchronized sets]
Let $\Phi:G\to H$ be right-resolving and let $G$ be strongly connected. A synchronized set is a set of the form $S=u\cdot_\Phi I'$ for some $I'\in V(G)$ and $u\in L^{\partial\Phi(I')}(H)$. A synchronized set $S\subset \partial\Phi^{-1}(J)$ for some $J\in V(H)$ is a maximal synchronized set (MSS) if $|v\cdot_\Phi S|\leq |S|$ for all $v\in L^{J}(H)$.
\end{definition}
If $S\subset \partial\Phi^{-1}(I)$ is a synchronized set under a right-resolver $\Phi:G\to H$, then so is $u\cdot_\Phi S$ for any $u\in L^I(H)$. The size of $u\cdot_\Phi S$ is also bounded above by $|V(G)|$, so there must be some $w\in L^I(H)$ such that, for all $v\in L^{i(v)}(H)$, $|vw\cdot_\Phi S|\leq |w\cdot_\Phi S|$. Therefore every right-resolver has an MSS contained in some fiber.
\begin{proposition}
\label{BRMSS}
Let $G$ be an strongly connected bi-resolving graph. Let $\Phi:G\to B(G)$ be a bi-resolver, and let $\Phi':G\to B(G)$ be a right resolver such that $\partial\Phi'=\partial\Phi$. Let $S\subset \partial\Phi'^{-1}(I)$ be a maximal synchronized set under $\Phi'$. Then $u\cdot_{\Phi'} S$ is an MSS under $\Phi'$ for any $u\in L^I(B(G))$. Consequently, every fiber of $\partial\Phi'$ contains an MSS under $\Phi'$.
\end{proposition}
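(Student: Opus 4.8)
The plan is to reduce the claim that $u \cdot_{\Phi'} S$ is an MSS to the single assertion that backward lifting under $\Phi'$ preserves the size of a maximal synchronized set, and then to extract that size-preservation from the existence of the bi-resolver $\Phi$. First I would record that $u \cdot_{\Phi'} S$ is again a synchronized set: writing $S = w \cdot_{\Phi'} I'$ with $w \in L^{\partial\Phi'(I')}(B(G))$ and $i(w) = I$, we have $u \cdot_{\Phi'} S = (uw) \cdot_{\Phi'} I'$ with $uw \in L^{\partial\Phi'(I')}(B(G))$. To test maximality of the set $S' = u \cdot_{\Phi'} S \subseteq \partial\Phi'^{-1}(i(u))$, take any $v \in L^{i(u)}(B(G))$ and use the composition identity $v \cdot_{\Phi'}(u \cdot_{\Phi'} S) = (vu) \cdot_{\Phi'} S$. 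Since $vu \in L^I(B(G))$ and $S$ is an MSS, this gives $|v \cdot_{\Phi'} S'| = |(vu) \cdot_{\Phi'} S| \le |S|$. Taking $v = u$ shows in particular $|S'| \le |S|$, so $S'$ is an MSS precisely when $|S'| = |S|$; conversely, once $|S'| = |S|$ the bound $|v \cdot_{\Phi'} S'| \le |S| = |S'|$ is exactly the MSS condition. Thus everything reduces to proving the lower bound $|u \cdot_{\Phi'} S| \ge |S|$.

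The inequality $|u \cdot_{\Phi'} S| \ge |S|$ is where the bi-resolving hypothesis must enter. Let $f_u$ denote the forward-lifting map $x \mapsto x \cdot_{\Phi'} u$ from $\partial\Phi'^{-1}(i(u))$ to $\partial\Phi'^{-1}(I)$, so that $u \cdot_{\Phi'} S = f_u^{-1}(S)$. If every element of $S$ lies in the image of $f_u$, then $f_u$ restricts to a surjection $f_u^{-1}(S) \to S$ and the bound follows; so it suffices to show that each vertex of a maximal synchronized set is forward-reachable along every incoming path $u$. This is genuinely false for an arbitrary right resolver, since $\Phi'$ need not be left resolving and $f_u$ need not be surjective, so the proof must use $\Phi$. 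The first dividend of $\Phi$ is that backward lifting under it is a bijection between fibers: left resolving supplies a unique incoming $\Phi$-lift of each edge, defining a map between fibers, and right resolving makes that map injective. By strong connectivity it follows that all fibers of $\partial\Phi' = \partial\Phi$ share a common size $d$. This is the analogue of the Eulerian hypothesis in Kari's argument, and it makes the uniform distribution on each fiber invariant under the backward dynamics of $\Phi'$.

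The main obstacle is the reachability statement itself, and I expect it to be the heart of the proof. The plan is to mirror the minimal-image argument of Proposition \ref{minImg}: there, strong connectivity forces all minimal images to share a size because any forward image can be transported into any other and cannot shrink. Here I would dually compare maximal synchronized sets across fibers, transporting $S$ by backward lifting along paths supplied by strong connectivity and using the balance afforded by the common fiber size $d$ to preclude shrinkage—equivalently, to show that a vertex lost from $S$ under a non-surjective $f_u$ would have to be recovered by a later backward step, contradicting the maximality of $S$. Making this balance precise, so that the non-determinism of $\Phi'$ cannot strand a vertex of $S$ outside the image of $f_u$, is the delicate point and the place where $\Phi$ is indispensable. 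Granting $|u \cdot_{\Phi'} S| = |S|$, the first assertion follows from the reduction above, and the final sentence is then immediate: the discussion preceding the proposition produces an MSS $S$ in some fiber $\partial\Phi'^{-1}(I)$, and for any vertex $A$ strong connectedness of $B(G)$ yields a path from $A$ to $I$, that is, a $u \in L^I(B(G))$ with $i(u) = A$; then $u \cdot_{\Phi'} S$ is an MSS contained in $\partial\Phi'^{-1}(A)$, so every fiber of $\partial\Phi'$ contains an MSS.
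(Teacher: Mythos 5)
Your formal reductions are sound: $u\cdot_{\Phi'}S$ is again a synchronized set, the MSS property of $S'=u\cdot_{\Phi'}S$ follows from $|S'|=|S|$ together with the composition identity $v\cdot_{\Phi'}(u\cdot_{\Phi'}S)=(vu)\cdot_{\Phi'}S$, and the final claim about every fiber follows from strong connectedness exactly as you say. (One small slip: $|S'|\leq|S|$ is just the MSS condition for $S$ applied to the path $u$ itself; your ``taking $v=u$'' would compute $|(uu)\cdot_{\Phi'}S|$, which is not the quantity you want.) But the heart of the proposition --- the lower bound $|u\cdot_{\Phi'}S|\geq|S|$ --- is never actually proved: you write ``Granting $|u\cdot_{\Phi'}S|=|S|$'' after only sketching a hoped-for balance argument, so the proposal has a genuine gap precisely at the one step where the bi-resolving hypothesis must do its work.

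Moreover, the route you sketch --- showing every vertex of $S$ lies in the image of the forward map $f_u$, so that $f_u$ restricts to a surjection $f_u^{-1}(S)\to S$ --- aims at a stronger statement than is needed, and it is not clear it can be pushed through directly (note that $|f_u^{-1}(S)|=|S|$ does not by itself force $S\subseteq\operatorname{im}f_u$: multiply-covered and uncovered vertices of $S$ could balance). The paper avoids reachability entirely with a local double count over a \emph{single} incoming edge, then inducts on path length. Since $\partial\Phi'=\partial\Phi$ and $\Phi$ is left resolving, $|E^{I'}(G)|=|E^I(B(G))|$ for every $I'\in\partial\Phi'^{-1}(I)$; partitioning $E^{I'}(G)$ according to $\Phi'$-image gives $\sum_{e\in E^{I}(B(G))}|e\cdot_{\Phi'}I'|=|E^I(B(G))|$, and summing over $I'\in S$ (the sets $e\cdot_{\Phi'}I'$ for distinct $I'\in S$ are disjoint because $\Phi'$ is right resolving) yields $\sum_{e\in E^{I}(B(G))}|e\cdot_{\Phi'}S|=|E^I(B(G))|\,|S|$. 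Since $S$ is an MSS, each of the $|E^I(B(G))|$ summands is at most $|S|$, so each must equal $|S|$; hence $e\cdot_{\Phi'}S$ is an MSS for every incoming edge $e$, and induction along $u$ finishes. Your observation that all fibers share a common size $d$ is true but is not the invariant the proof needs: what is needed is the in-degree identity $|E^{I'}(G)|=|E^I(B(G))|$ at each vertex of the fiber, which is exactly what the bi-resolver with the same vertex map provides.
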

\begin{proof}
Let $I'\in S\subset\partial\Phi'^{-1}(I)$. Since $\Phi$ is bi-resolving, $\Phi|_{E^{I'}(G)}:E^{I'}(G)\to E^I(B(G))$ is a bijection, and so $|E^{I'}(G)|=|E^I(B(G))|$. Since $\Phi'$ is right resolving, we then have
$$
\sum_{e\in E^{I}(B(G))} |e\cdot_{\Phi'} I'|= |E^{I'}(G)|=|E^I(B(G))|
$$
and using the right resolving property to sum over $S$ gives
\begin{align}
\sum_{e\in E^{I}(B(G))} |e\cdot_{\Phi'} S| = |E^I(B(G))||S|\label{eq1}
\end{align}
Since $S$ is an MSS, $|e\cdot_{\Phi'} S|\leq |S|$ for all $e\in E^I(B(G))$. By (\ref{eq1}), it must then be that $|e\cdot_{\Phi'} S| = |S|$ for all $e\in E^I(B(G))$. Therefore $e\cdot_{\Phi'} S$ is an MSS for all $e\in E^I(B(G))$ and so, by induction, $u\cdot_{\Phi'} S$ is an MSS for all $u\in L^I(B(G))$. 

By the strong connectedness of $B(G)$, for any $J\in V(B(G))$, there is a $v$ from $J$ to $I$. The set $v\cdot_{\Phi'} S$ is then an MSS contained in $\partial\Phi'^{-1}(J)$. Therefore every fiber of $\partial\Phi'$ contains an MSS.
\end{proof}
Suppose $\Phi: G \to H$ is a right resolver. For any $I\in V(H)$ and $w\in L_I(H)$, the equivalence classes of the relation on $\partial\Phi^{-1}(I)$ defined by synchronization by $w$ partitions $\partial\Phi^{-1}(I)$ into synchronized sets. The next proposition shows that, under some assumptions, $w$ can be chosen so that these synchronized sets are maximal.
\begin{proposition}
Let $G$ be a strongly connected bi-resolving graph. Let $\Phi:G\to B(G)$ be a right resolver for which there is a bi-resolver onto $B(G)$ with the same vertex map. For any $I\in V(B(G))$, there is a path $w\in L_I(B(G))$ that partitions $\partial\Phi^{-1}(I)$ into MSS's under $\Phi$.
\end{proposition}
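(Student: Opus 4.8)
The plan is to choose $w$ so that $\partial\Phi^{-1}(I)\cdot_\Phi w$ is a \emph{minimal image}, and then to read off the synchronization classes of $w$ directly from a decomposition of the terminal fiber into MSS's. Write $\Phi_0:G\to B(G)$ for the bi-resolver with $\partial\Phi_0=\partial\Phi$. I would first record two preliminary facts. First, all fibers of $\partial\Phi$ have a common size $n$: for each edge $e:I\to J$ of $B(G)$, the right resolving of $\Phi_0$ produces one lift of $e$ out of each vertex of $\partial\Phi^{-1}(I)$, while left resolving produces one lift into each vertex of $\partial\Phi^{-1}(J)$, so $|\partial\Phi^{-1}(I)|=|\partial\Phi^{-1}(J)|$, and strong connectedness equalizes all fibers. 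Second, I would prove a transversality bound: a minimal image $U$ and a synchronized set $S$ lying in the same fiber $\partial\Phi^{-1}(K)$ satisfy $|U\cap S|\leq 1$. Indeed, writing $S=\tilde u\cdot_\Phi I'$ with $\tilde u\in L_K(B(G))$, every element of $S$ has the same image $I'$ under $\cdot_\Phi\tilde u$, whereas $\cdot_\Phi\tilde u$ is injective on $U$ because $|U\cdot_\Phi\tilde u|=|U|$; two points of $U\cap S$ would then collide under $\cdot_\Phi\tilde u$.

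With these in hand the reduction is clean. Suppose $w\in L_I(B(G))$ is chosen so that $U:=\partial\Phi^{-1}(I)\cdot_\Phi w$ is a minimal image (such $w$ exist, and by routing through a cycle at a prescribed vertex I may take $t(w)$ to be any desired $K$), and suppose $\partial\Phi^{-1}(K)$ decomposes as a disjoint union of MSS's $T_1,\dots,T_m$. For $J\in U$ let $T(J)$ be the block containing $J$. Since $x\cdot_\Phi w\in U$ for every $x\in\partial\Phi^{-1}(I)$, transversality gives $T(J)\cap U=\{J\}$, so the synchronization class $w\cdot_\Phi J$ coincides with the backward lift $w\cdot_\Phi T(J)$. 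By Proposition \ref{BRMSS}, the backward lift of an MSS is again an MSS, so every synchronization class of $w$ is an MSS; as these classes partition $\partial\Phi^{-1}(I)$, this is exactly the desired conclusion. Thus the entire statement reduces to the single claim that \emph{the maximal synchronized sets of $\Phi$ partition each fiber}.

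This last claim is the main obstacle, and it is where bi-resolving is genuinely used: for a general right resolver, maximal synchronized sets can nest and have different sizes, so no such partition exists. I would attack it by showing that all MSS's share a common cardinality $t$ and are pairwise disjoint, giving $n=rt$ where $r$ is the minimal image size. The engine is the degree identity behind (\ref{eq1}): for any synchronized set $S\subset\partial\Phi^{-1}(K)$, summing the right-resolving count over $E^K(B(G))$ and using $|E^{K'}(G)|=|E^K(B(G))|$ (bi-resolving) yields $\sum_{e\in E^K(B(G))}|e\cdot_\Phi S|=|E^K(B(G))|\,|S|$. For an MSS each term is at most $|S|$, so every term equals $|S|$, and by Proposition \ref{BRMSS} each $e\cdot_\Phi S$ is again an MSS of the same size. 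A maximum-principle argument propagating the extremal sizes backward along the strongly connected graph should then pin all MSS's to a common size; combined with transversality and the fact that every vertex lies in some minimal image (Proposition \ref{minImg}), this is meant to force the MSS's to tile each fiber.

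I expect the delicate point to be \emph{disjointness}: the step from ``backward lifts preserve MSS's'' to ``two overlapping MSS's in one fiber must coincide.'' The balanced identity only controls single-step backward lifts on average, and equality of cardinality alone does not rule out a one-point overlap, so this is where the real work lies. The intended route is to combine the common-cardinality input with transversality against minimal images, and then argue that a maximal family of pairwise-disjoint MSS's must already cover the fiber: any uncovered vertex grows to an MSS that, by maximality of the family, overlaps an existing block, and disjointness then forces the two to be equal, contradicting that the vertex was uncovered. Establishing the common-cardinality input rigorously via the maximum principle is the crux on which the whole proof rests.
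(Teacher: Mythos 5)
Your front end is fine: the transversality bound $|U\cap S|\leq 1$ is correct, and so is the reduction — if $w$ carries $\partial\Phi^{-1}(I)$ onto a minimal image $U$ and the terminal fiber is a disjoint union of MSS's, then each synchronization class of $w$ equals the backward lift of the unique block meeting $U$ at the corresponding point, hence is an MSS by Proposition \ref{BRMSS}. But this only trades the proposition for the claim that \emph{MSS's tile each fiber}, and that claim is never established; it is essentially the proposition itself in set-theoretic form. Concretely, two steps fail. First, the covering argument is circular: you argue that a maximal pairwise-disjoint family of MSS's covers the fiber because an MSS through an uncovered vertex would ``overlap an existing block, and disjointness then forces the two to be equal'' — but the disjoint-or-equal dichotomy for overlapping MSS's is exactly what you identified as unproven, and the balanced identity plus equal cardinality does not supply it (as you note, it does not exclude a one-point overlap). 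Second, ``any uncovered vertex grows to an MSS'' is unjustified: Proposition \ref{BRMSS} gives an MSS in every fiber and preserves MSS's under backward lifts, but gives no MSS \emph{containing a prescribed vertex}; maximizing $|S|$ over synchronized sets containing $y$ does not work either, since the competitors $v\cdot_\Phi S$ live in other fibers and need not contain $y$. The common-cardinality ``maximum principle'' is likewise only gestured at. So the crux on which, by your own account, the whole proof rests is open.

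The paper avoids the tiling lemma entirely by an augmentation argument tied to a single word, where disjointness is free. Start with one MSS $S_1\subset\partial\Phi^{-1}(I)$ realized as a full backward lift and synchronized by some $w$ (Proposition \ref{BRMSS} and the remark preceding it). If the MSS's $S_1,\dots,S_n$ among the classes of $w$ miss some $I'\in\partial\Phi^{-1}(I)$, choose $u$ with $S_1\cdot_\Phi wu=I'$ (strong connectedness of $G$) and pass to $wuw$: the sets $wu\cdot_\Phi S_1,\dots,wu\cdot_\Phi S_n$ and $S_1$ are all MSS's (backward lifts of MSS's along the cycle $wu$), all synchronized by $wuw$, and pairwise disjoint — the first $n$ are preimages of the disjoint $S_i$ under $x\mapsto x\cdot_\Phi wu$, and $S_1$ is disjoint from each of them because $S_1\cdot_\Phi wu=I'\notin\bigcup_i S_i$. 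The number of pairwise-disjoint MSS's synchronized by the current word thus strictly increases and is bounded by the fiber size, so iteration terminates with a word whose MSS classes cover, hence partition, $\partial\Phi^{-1}(I)$. In other words, disjointness for synchronized sets attached to one common word is automatic, whereas disjointness for abstract MSS's — the input your route requires — only follows \emph{a posteriori} from this construction. If you want to salvage your reduction, the natural proof of your tiling claim is precisely this augmentation, at which point the minimal-image and transversality machinery (correct and pleasant as it is) becomes superfluous.
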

\begin{proof}
Let $S_1,\dots, S_n\subset \partial\Phi^{-1}(I)$ be MSS's synchronized by a path $w$ to vertices $J_1',\dots, J_n'\in\partial\Phi^{-1}(t(w))$ respectively. By proposition \ref{BRMSS}, we can choose $w$ so that $n\geq 1$. Suppose there is an $I'\in\partial\Phi^{-1}(I)\setminus\bigcup_{i=1}^nS_i$. By strong connectedness of $G$, there is a path $u\in L_{t(w)}(B(G))$ such that $S_1\cdot_\Phi wu=I'$. Consider the MSS's synchronized by $wuw$. These include $wu\cdot_\Phi S_1,\dots,wu\cdot_\Phi S_n$, which are synchronized to $J_1',\dots, J_n'$ respectively, and $S_1$, which is synchronized to $I'\cdot_\Phi w$. Because these sets are synchronized to distinct vertices by the same path, they are disjoint. Repeating this process gives a path that partitions $\partial\Phi^{-1}(I)$ into maximal synchronized sets.
\end{proof}

The following is an illustration of the right resolvers $\Phi$ and $\Phi'$ in the proof of proposition \ref{BRStab}. For each of $\Phi$ and $\Phi'$, the left (resp. right) states in $G$ map to the left (resp. right) states in $B(G)$. The solid and dashed edges in $G$ map to the corresponding edges in $B(G)$.
\begin{center}
        \begin{tikzpicture}
            \tikzstyle{vtx} = [circle, minimum width = 2mm, fill, inner sep = 0pt]
            \node (G) at (-1.5,2) {$G:$};
            \node (M_G) at (-1.5,-1) {$B(G):$};
            
            \node (P) at (.75, 4.5) {$\Phi:$};
            
            \node[vtx] (I) at (0,2){};
            \node[vtx] (J_1) at (1.5,2.75){};
            \node[vtx] (J_2) at (1.5,1.25){};
            \node[vtx] (N_1) at (0,3.5){};
            \node[vtx] (N_2) at (0,.5){};
            
            \node (I_1) [right = 0cm of J_1]{$I_1'$};
            \node (I_2) [right = 0cm of J_2]{$I_2'$};
            
            \draw[- Stealth, thick, dashed] (N_1) to node[above]{$f_1$} (J_1);
            \draw[- Stealth, thick] (N_2) to node[below]{$f_2$} (J_2);
            \draw[- Stealth, thick] (I) to node[above]{$e_1$} (J_1);
            \draw[- Stealth, thick, dashed] (I) to node[below]{$e_2$} (J_2);
            
            \node[vtx] (M_1) at (0,-1){};
            \node[vtx] (M_2) at (1.5,-1){};
            
            \node (I) [right = 0cm of M_2] {$I$};
            
            \draw[- Stealth, thick] (M_1) to [bend left = -20] (M_2);
            \draw[- Stealth, thick, dashed] (M_1) to [bend left = 20] (M_2);

            \node (P') at (3.75, 4.5) {$\Phi':$};
            
            \node[vtx] (I') at (3,2){};
            \node[vtx] (J_1') at (4.5,2.75){};
            \node[vtx] (J_2') at (4.5,1.25){};
            \node[vtx] (N_1') at (3,3.5){};
            \node[vtx] (N_2') at (3,.5){};
            
            \node (I_1') [right = 0cm of J_1']{$I_1'$};
            \node (I_2') [right = 0cm of J_2']{$I_2'$};
            
            \draw[- Stealth, thick, dashed] (N_1') to node[above]{$f_1$} (J_1');
            \draw[- Stealth, thick] (N_2') to node[below]{$f_2$} (J_2');
            \draw[- Stealth, thick, dashed] (I') to node[above]{$e_1$} (J_1');
            \draw[- Stealth, thick] (I') to node[below]{$e_2$} (J_2');
            
            \node[vtx] (M_1') at (3,-1){};
            \node[vtx] (M_2') at (4.5,-1){};
            
            \node (I) [right = 0cm of M_2'] {$I$};
            
            \draw[- Stealth, thick] (M_1') to [bend left = -20] (M_2');
            \draw[- Stealth, thick, dashed] (M_1') to [bend left = 20] (M_2');
        \end{tikzpicture}
    \end{center}
    
\begin{proposition}
\label{BRStab}
Let $G$ be a non-bunchy, strongly connected graph such that there is a bi-resolving homomorphism $\Phi:G\to B(G)$. Then there is a right-resolving homomorphism $\Phi':G\to B(G)$ that has a non-trivial stability congruence $\sim_{\Phi'}$. Moreover, $\Phi'$ induces a bi-resolving homomorphism from $G/\sim_{\Phi'}$ onto $B(G)$. 
\end{proposition}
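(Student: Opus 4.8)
The plan is to imitate Kari's recolouring argument: starting from the given bi-resolver $\Phi:G\to B(G)$, use the failure of bunchiness to locate a vertex at which transposing two colours creates a merge, show that this merge propagates to a genuine stability relation, and finally check that the merge is exactly what is needed to repair bi-resolvingness in the quotient. To locate the configuration drawn in the figure, note that since $G$ is not bunchy there is a vertex $X\in V(G)$ with two out-edges $e_1,e_2$ whose terminal vertices $I_1'\neq I_2'$ satisfy $\Sigma_G(I_1')=\Sigma_G(I_2')$. Factoring $\Sigma_G=\partial\Delta_0\circ\partial\Phi$ through the map $\Delta_0:B(G)\to M(G)$ supplied by theorem \ref{M(G)}, the bunchiness of $B(G)$ forces $\partial\Phi(I_1')=\partial\Phi(I_2')$, for otherwise $\partial\Delta_0$ would fail to be injective on the followers of $\partial\Phi(X)$. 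Hence $a:=\Phi(e_1)$ and $b:=\Phi(e_2)$ are distinct parallel edges of $B(G)$ sharing a terminal vertex. I then define $\Phi'$ to agree with $\Phi$ everywhere except that it swaps these colours at $X$, setting $\Phi'(e_1)=b$ and $\Phi'(e_2)=a$. Because $a,b$ share a terminal vertex and $\partial\Phi(I_1')=\partial\Phi(I_2')$, the vertex map is unchanged, so $\partial\Phi'=\partial\Phi$; in particular $\Phi'$ is a right resolver with the same vertex map as the bi-resolver $\Phi$, so proposition \ref{BRMSS} and the preceding proposition on partitioning fibres into maximal synchronised sets both apply to $\Phi'$.

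Next I would pin down two images that differ by the pair $\{I_1',I_2'\}$. Since $\Phi$ is bi-resolving, forward synchronisation along $a$ and along $b$ carries the fibre $\partial\Phi^{-1}(i(a))$ bijectively onto $\partial\Phi^{-1}(t(a))$, and the swap at $X$ alters only the image of $X$. Computing $U_a:=\partial\Phi^{-1}(i(a))\cdot_{\Phi'}a$ and $U_b:=\partial\Phi^{-1}(i(a))\cdot_{\Phi'}b$ as sets, one finds $U_a=\partial\Phi^{-1}(t(a))\setminus\{I_1'\}$ and $U_b=\partial\Phi^{-1}(t(a))\setminus\{I_2'\}$, so that $U_a\Delta U_b=\{I_1',I_2'\}$.

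The step I expect to be the main obstacle is converting this pair of images into the hypothesis of proposition \ref{minImgStab}, namely a pair of \emph{minimal} images whose symmetric difference is exactly $\{I_1',I_2'\}$. The sets $U_a,U_b$ are in general not minimal: their common size is one below the fibre size, whereas the uniform minimal-image size guaranteed by proposition \ref{minImg} can be strictly smaller. I would therefore synchronise $U_a$ and $U_b$ simultaneously by a common word, keeping track of the symmetric difference, which at every step remains contained in the image of $\{I_1',I_2'\}$. The bi-resolving structure is essential precisely here, and it is exactly what the maximal-synchronised-set machinery controls: every MSS of the bi-resolver $\Phi$ is a singleton, so by proposition \ref{BRMSS} and the partition proposition the MSS's of $\Phi'$ give a controlled partition of each fibre, bounding how far synchronisation can collapse and allowing $U_a,U_b$ to be driven down to minimal images without merging the tracked pair into the common part. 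Carrying this out produces minimal images differing by $\{I_1',I_2'\}$ (or else a direct collision $I_1'\cdot_{\Phi'}w=I_2'\cdot_{\Phi'}w$ available after every prefix), and proposition \ref{minImgStab} then yields $I_1'\sim_{\Phi'}I_2'$, so $\sim_{\Phi'}$ is non-trivial.

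Finally, for the \emph{moreover} clause I would factor $\Phi'=\Delta\circ\Psi$ through the stability quotient $\Psi:G\to G/\sim_{\Phi'}$ using MacDonald's theorem on stability quotients, which also provides that $\Delta:G/\sim_{\Phi'}\to B(G)$ is a right resolver. To upgrade $\Delta$ to a bi-resolver I would invoke proposition \ref{injIfSurj}, for which it suffices to verify that $\Delta$ is surjective on incoming edges at every vertex class. The only vertices at which $\Phi'$ loses surjectivity on incoming edges are $I_1'$, which no longer receives colour $a$, and $I_2'$, which no longer receives colour $b$. Since $I_1'\sim_{\Phi'}I_2'$, these collapse to a single vertex class of $G/\sim_{\Phi'}$, on which the edge $e_2$ (now coloured $a$) and the edge $e_1$ (now coloured $b$) restore both missing colours; every other vertex class contains a vertex at which $\Phi=\Phi'$ was already surjective on incoming edges. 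Thus $\Delta$ is surjective on incoming edges everywhere, and as $G/\sim_{\Phi'}$ is strongly connected, proposition \ref{injIfSurj} makes $\Delta$ bi-resolving, completing the moreover clause.
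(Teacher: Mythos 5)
Your construction of $\Phi'$ (locating $I_1',I_2'$ from non-bunchiness plus the bunchiness of $B(G)$, then transposing the two colours at the common initial vertex) and your verification of the moreover clause are essentially identical to the paper's. The genuine gap is in the middle step, where stability is established, and it is twofold. First, even granting that $U_a$ and $U_b$ can be driven down to minimal images by a common word $v$, the tracked pair moves forward under $v$: what you would obtain is a pair of minimal images with symmetric difference $\{I_1'\cdot_{\Phi'}v,\ I_2'\cdot_{\Phi'}v\}$, so proposition \ref{minImgStab} yields $I_1'\cdot_{\Phi'}v\sim_{\Phi'}I_2'\cdot_{\Phi'}v$, not $I_1'\sim_{\Phi'}I_2'$. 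The stability relation is a forward congruence only ($I\sim_\Phi J$ implies $I\cdot_\Phi e\sim_\Phi J\cdot_\Phi e$, with no converse), so stability cannot be pulled back along $v$, and a collision available after the single prefix $v$ says nothing about prefixes not beginning with $v$. This is fatal for your moreover clause as written, which needs precisely that $I_1'$ and $I_2'$ fall in one class of $\sim_{\Phi'}$: these are the only two vertices where $\Phi'$ fails incoming surjectivity, and a merge of some other stable pair would leave the class $[I_1']$ still missing the colour $a$. Second, the control mechanism you invoke conflates backward and forward objects: maximal synchronized sets and proposition \ref{BRMSS} govern backward lifts $u\cdot_{\Phi'}S$, whereas minimal images are forward images of fibers, and neither your sketch nor anything in the paper transfers one kind of control to the other. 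Your parenthetical fallback (``a direct collision after every prefix'') is exactly the definition of $I_1'\sim_{\Phi'}I_2'$ and is asserted rather than proved.

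The paper's proof avoids minimal images entirely here, and the contrast shows why. Using the companion edges $f_i\in E^{I_i'}(G)$ with $\Phi(f_1)=\Phi(e_2)$ and $\Phi(f_2)=\Phi(e_1)$ — which exist because $\Phi$ is bi-resolving, and which your stability argument never uses — it counts backward lifts to show that any $S\subset\partial\Phi^{-1}(I)$ containing exactly one of $I_1',I_2'$ satisfies $|\Phi'(e_1)\cdot_{\Phi'}S|=|S|+1$ (or symmetrically with $e_2$), hence is not an MSS. Therefore every partition of the fiber into MSS's keeps $I_1',I_2'$ in a common block; since partitioning words exist by the unnumbered proposition preceding proposition \ref{BRStab}, and MSS partitions pull back along any cycle $v'$ at $I$ by proposition \ref{BRMSS}, every prefix $v$ extends to $v'w$ with $I_1'\cdot_{\Phi'}v'w=I_2'\cdot_{\Phi'}v'w$, giving stability directly from the definition. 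If you wanted to salvage your route, you would need minimal images whose symmetric difference is exactly $\{I_1',I_2'\}$ themselves; the natural attempt (push a minimal image containing the swap vertex through $a$ and through $b$) fails because, unlike in the weakly almost bunchy setting of proposition \ref{wabStab}, the two colours cannot be arranged to agree on the remaining vertices of the fiber.
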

\begin{proof}
Since $G$ is non-bunchy, there are distinct vertices $I_1',I_2'\in V(G)$ and edges $e_i\in E^{I_i'}(G)$ such that $i(e_1)=i(e_2)$, and $\Sigma_G(I_1')=\Sigma_G(I_2')$. Since $B(G)$ is bunchy, we must then have $\partial\Phi(I_1')=\partial\Phi(I_2')=I$ for some $I\in V(B(G))$. Since $\Phi$ is bi-resolving, there are edges $f_i\in E^{I_i'}(G)$ such that $\Phi(f_1)=\Phi(e_2)$ and $\Phi(f_2)=\Phi(e_1)$. Define $\Phi':G\to B(G)$ by setting $\partial\Phi=\partial\Phi'$, $\Phi'(e_1)=\Phi(e_2)$, $\Phi'(e_2)=\Phi(e_1)$, and $\Phi'(e)=\Phi(e)$ for all $e\in E(G)\setminus\{e_1,e_2\}$. 

Let $S\subset \partial\Phi^{-1}(I)$ be such that $I_1'\in S$ and $I_2'\notin S$. Since $\Phi'$ is bi-resolving on every vertex but $I_1'$ and $I_2'$, $|\Phi'(e_1)\cdot_{\Phi'} (S\setminus\{I_1'\})|=|S|-1$. Also, $\Phi'(e_1)\cdot_{\Phi'} I_1'=\{i(e_1),i(f_1)\}$. Since $\Phi'$ is still right-resolving, we then have $|\Phi'(e_1)\cdot_{\Phi'} S|=|S|+1$. Therefore $S$ is not an MSS. Similarly, if $S\subset \partial\Phi^{-1}(I)$ such that $I_2'\in S$ but $I_1'\notin S$, then $S$ is not an MSS. Thus, in any partition of $\partial\Phi^{-1}(I)$ into MSS's, $I_1'$ and $I_2'$ must belong to the same partition element.

To show that $I_1'$ and $I_2'$ are stable under $\Phi'$, let $v\in L_I(B(G))$, and extend $v$ by strong connectedness to a cycle $v'$ from $I$ to $I$. Let $w\in F(I)$ be a path that partitions $\partial\Phi^{-1}(I)$ into some MSS's $S_1,\dots, S_n$. Then $v'w$ partitions $\partial\Phi^{-1}(I)$ into the MSS's $v'\cdot_{\Phi'} S_1, \dots, v'\cdot_{\Phi'} S_n$, and $\{I_1',I_2'\}\subset v'\cdot_{\Phi'} S_i$ for some $i\in\{1,\dots, n\}$, so $I_1'\cdot_{\Phi'} v'w=I_2' \cdot_{\Phi'} v'w$. Hence $I_1'\sim_{\Phi'} I_2'$.

We now show that the right-resolver from $G/\sim_{\Phi'}$ to $B(G)$ induced by $\Phi'$ is bi-resolving. Let $\Psi: G/\sim_{\Phi'}\to B(G)$ be the right-resolver given by $\partial\Psi([I']_{\sim_{\Phi'}}) = \partial\Phi(I')$ for $I'\in V(G)$ and $\Psi([e]_{\sim_{\Phi'}})= \Phi'(e)$ for $e\in E(G)$. By proposition \ref{injIfSurj}, it suffices to show that $\Psi|_{E^{[I']_{\sim_{\Phi'}}}}:E^{[I']_{\sim_{\Phi'}}}\to E^{\partial\Phi(I')}$ is surjective for all $[I']_{\sim_{\Phi'}}\in V(G/\sim_{\Phi'})$. Let $[I']_{\sim_{\Phi'}}\in V(G/\sim_{\Phi'})$, and let $I=\partial\Phi(I')$. If there is a $I_3'\in[I']_{\sim_{\Phi'}}\setminus\{I_1',I_2'\}$ then $\Phi'(E^{I_3'})=E^I$ so $\Psi(E^{[I']_{\sim_{\Phi'}}}) = E^I$. Otherwise, $[I']_{\sim_{\Phi'}}\subset\{I_1',I_2'\}$, and since $I_1'\sim_{\Phi'}I_2'$ we have $[I']_{\sim_{\Phi'}}=\{I_1',I_2'\}$. Since $\Phi'(E^{I_1'}\cup E^{I_2'})=E^I$, we still have $\Psi(E^{[I']_{\sim_{\Phi'}}})=E^I$. Hence $\Psi|_{E^{[I']_{\sim_{\Phi'}}}}$ is surjective.

\end{proof}
The following proposition shows that the bunchy factor conjecture holds for bi-resolving graphs.
\begin{proposition}
Let $G$ be a strongly connected bi-resolving graph. Then $B(G)\leq_S G$.
\end{proposition}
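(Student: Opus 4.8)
The plan is to induct on $|V(G)|$, following the structure of the proof of Proposition~\ref{wabBF}. The base case $|V(G)|=1$ is immediate, since then $B(G)=G$ and the identity is a synchronizing right resolver. For the inductive step, suppose $|V(G)|=n>1$ and the statement holds for all strongly connected bi-resolving graphs on fewer than $n$ vertices. If $G$ is bunchy then $B(G)=G$ and there is nothing to prove, so assume $G$ is non-bunchy. The whole argument rests on producing a proper bi-resolving quotient of $G$ to which the inductive hypothesis applies.

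Applying Proposition~\ref{BRStab} to $G$ yields a right resolver $\Phi':G\to B(G)$ with non-trivial stability congruence $\sim_{\Phi'}$, together with the induced bi-resolving homomorphism $\Psi:G/\sim_{\Phi'}\to B(G)$. Since $\sim_{\Phi'}$ is non-trivial, $|V(G/\sim_{\Phi'})|<n$, and since $G/\sim_{\Phi'}$ is a right-resolving (indeed synchronizing) image of the strongly connected graph $G$, it is strongly connected. To invoke induction I must check that $G/\sim_{\Phi'}$ is itself bi-resolving, i.e. that it admits a bi-resolver onto $B(G/\sim_{\Phi'})$; the map $\Psi$ handed to us lands in $B(G)$ rather than $B(G/\sim_{\Phi'})$, so reconciling these two targets is the crux of the proof.

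The key step is to show $B(G)\cong B(G/\sim_{\Phi'})$. On one hand, $\Psi$ exhibits $B(G)$ as a bunchy right-resolving factor of $G/\sim_{\Phi'}$, so the first item of Theorem~\ref{bunchy}, applied to $G/\sim_{\Phi'}$, gives $B(G)\leq_R B(G/\sim_{\Phi'})$. On the other hand, $B(G/\sim_{\Phi'})\leq_R G/\sim_{\Phi'}\leq_R G$ (the quotient map being a right resolver by MacDonald's quotient theorem), so $B(G/\sim_{\Phi'})$ is a bunchy right-resolving factor of $G$, and the first item of Theorem~\ref{bunchy}, applied to $G$, gives $B(G/\sim_{\Phi'})\leq_R B(G)$. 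Since $\leq_R$ is a partial order on isomorphism classes, $B(G)\cong B(G/\sim_{\Phi'})$. Composing $\Psi$ with this isomorphism produces a bi-resolver from $G/\sim_{\Phi'}$ onto $B(G/\sim_{\Phi'})$, so $G/\sim_{\Phi'}$ is bi-resolving. I expect this identification to be the main obstacle: everything downstream is bookkeeping, but one must be careful that the bi-resolving map supplied by Proposition~\ref{BRStab} has the correct target, which is exactly what the isomorphism secures.

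With $G/\sim_{\Phi'}$ known to be bi-resolving, the induction hypothesis gives $B(G/\sim_{\Phi'})\leq_S G/\sim_{\Phi'}$. MacDonald's quotient theorem makes the quotient map $G\to G/\sim_{\Phi'}$ a synchronizing right resolver, so $G/\sim_{\Phi'}\leq_S G$, and transitivity of $\leq_S$ (Theorem~\ref{degMult}) yields $B(G/\sim_{\Phi'})\leq_S G$. Using $B(G)\cong B(G/\sim_{\Phi'})$ then gives $B(G)\leq_S G$, completing the induction. (Alternatively, one could avoid the explicit isomorphism and mirror Proposition~\ref{wabBF} verbatim: take a synchronizing right resolver $G\to B(G/\sim_{\Phi'})$, factor it through $B(G)$ using the second item of Theorem~\ref{bunchy}, and apply Theorem~\ref{degMult} to conclude that the resulting factor $G\to B(G)$ is synchronizing.)
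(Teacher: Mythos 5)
Your proof is correct and takes essentially the same route as the paper: induct on $|V(G)|$, use Proposition~\ref{BRStab} to pass to the proper, strongly connected quotient $G/\sim_{\Phi'}$, and apply the induction hypothesis together with transitivity of $\leq_S$. Your explicit verification that $B(G/\sim_{\Phi'})\cong B(G)$ (via Theorem~\ref{bunchy} and antisymmetry of $\leq_R$) is the one step the paper's proof asserts without justification, so it is a welcome filling-in rather than a different argument; incidentally, you also correctly give $\Phi'$ the target $B(G)$, where the paper's proof has a typo writing $M(G)$.
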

\begin{proof}
If $|V(G)|=1$, the claim clearly holds. Suppose $|V(G)|=n>1$ and that the claim holds for graphs with fewer than $n$ vertices. If $G$ is bunchy, then $B(G)=G$ and so $B(G)\leq_S G$. If $G$ is non-bunchy, by proposition \ref{BRStab}, there is a right-resolver $\Phi':G\to M(G)$ such that $|V(G/\sim_{\Phi'})|<|V(G)|$, and there is a bi-resolver from $G/\sim_{\Phi'}$ to $B(G)=B(G/\sim_{\Phi'})$. By the induction hypothesis, $B(G/\sim_{\Phi'})\leq_S G/\sim_{\Phi'}\leq_S G$, so $B(G)\leq_S G$.

\end{proof}
We have now shown that the bunchy factor conjecture holds for bi-resolving graphs, but we have not shown that the $O(G)$ conjecture holds similarly. In the case of weakly almost bunchy graphs, we used the fact that that class of graphs is closed under right resolvers to show that the bunchy factor conjecture implies the $O(G)$ conjecture. The class of bi-resolving graphs, however, is not closed under right resolvers, so a different or modified strategy would be needed.

\section{Empirical evidence for the bunchy factor conjecture}
We now present computer generated data that supports the bunchy factor conjecture, and by extension the $O(G)$ conjecture. In \cite{AMT}, a polynomial time algorithm is given for constructing $M(G)$, and deciding isomorphism between $M(G)$ and $M(H)$ for two graphs $G$ and $H$. In \cite{Mac}, polynomial time algorithms are given for constructing $B(G)$ and the stability relation of a right resolver. By implementing these algorithms we can estimate the probability that a given right resolver $\Phi: G\to B(G)$ is synchronizing. The bunchy factor conjecture states that this probability is nonzero for all strongly connected graphs. Not only did all of the graphs tested have a positive associated probability, but our results suggest that most right resolvers from a graph $G$ to its $B(G)$ are synchronizing.

For an irreducible graph $G$, the probability that a right resolver $\Phi:G\to B(G)$ is synchronizing was estimated by generating random right resolvers until 100 synchronizing right resolvers from $G$ to $B(G)$ were found (the fact that the testing procedure never failed to find a synchronizer shows that the bunchy factor conjecture is true for all graphs tested). The probability is then given by $p=100/\textit{\# of right resolvers generated}$. This estimator is derived from the fact that the total number of right resolvers generated is modeled by the sum of 100 geometric random variables, which has expectation $100/p$ where $p$ is the probability of success, in this case the probability of synchronization.

The following table records the average of the associated probability of graphs according to their minimal right resolving factor (given by an adjacency matrix) and number of states. Each entry in the table is an average over 10,000 graphs with the given $M(G)$ and $|V(G)|$.
\begin{center}
\begin{tabular}{|c|c|c|c|c|c|}
    \hline
    $M(G)$ / $|V(G)|$ & 4 & 5 & 6 & 7 & 8\\
    \hline
    $\begin{bmatrix}
    2&1\\
    1&0
    \end{bmatrix}$ & .983705 & .997026 & .996073 & .999273 & .999419\\
    \hline
    $\begin{bmatrix}
    1&2\\
    1&0
    \end{bmatrix}$ & .981192 & .982191 & .988474 & .993667 & .995971\\
    \hline
    $\begin{bmatrix}
    0&3\\
    1&0
    \end{bmatrix}$ & .945870 & .926934 & .938034 & .957786 & .975323\\
    \hline
\end{tabular}
\end{center}
The generally high probability (greater than $.9$) that a right resolver $\Phi:G \to B(G)$ is synchronizing is consistent with the constant out-degree case, where it is known that most road colourings are synchronizing. We might also expect that right resolvers are synchronizing with high probability (in a rigorous sense) since this was shown by Berlinkov in \cite{Berlinkov} to be true for road colourings. The small size of the graphs tested in our results, however, limits the insight we can provide on this point.

For most graphs, every right resolver from $G$ to $B(G)$ tested was synchronizing. The few graphs with an estimated associated probability of less than one often had a bimodal or multi-modal distribution. The following histograms illustrate these results. For each $M(G)$ and $|V(G)|$, the left histogram includes all graphs tested, while the right histogram includes only those with probability less than one.

\begin{center}
\begin{tikzpicture}
    \node (M) at (-1,0) {$M(G):$};
    \node[vtx] (g1) at (0,0){};
    \node[vtx] (g2) at (1,0){};
    
    \draw[-Stealth, thick] (g1) to [bend left] (g2);
    \draw[-Stealth, thick] (g2) to [bend left] (g1);
    
    \draw[Stealth-, thick] (g2)+(.7mm,-.9mm) arc (-160:160:2.5mm);
    \draw[Stealth-, thick] (g2)+(-.3mm,-1mm) arc (-165:165:4mm);
    
    \node at (-4,-3) {\includegraphics[scale=.33]{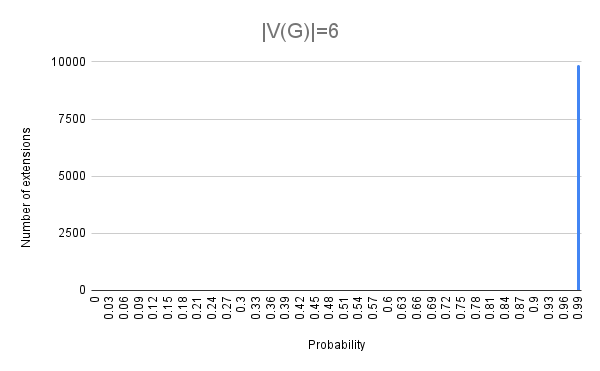}};
    \node at (4,-3) {\includegraphics[scale=.33]{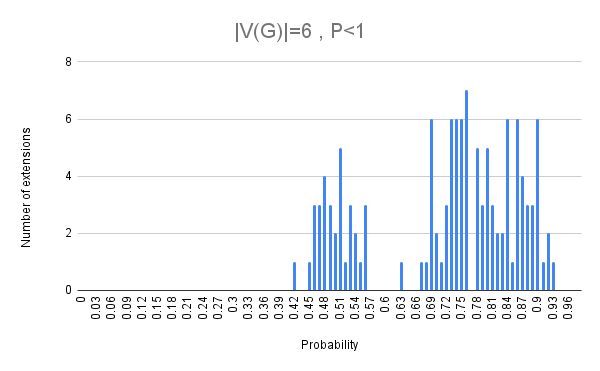}};
    
    \node at (-4,-8) {\includegraphics[scale=.33]{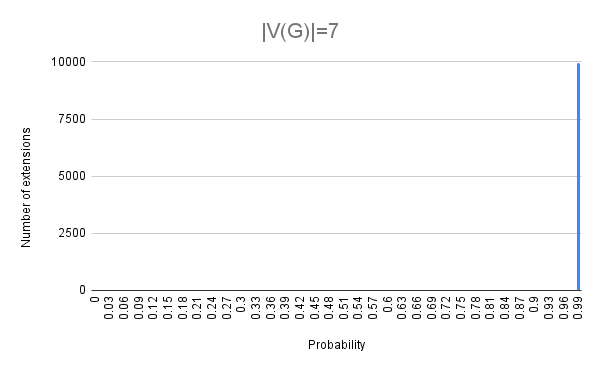}};
    \node at (4,-8) {\includegraphics[scale=.33]{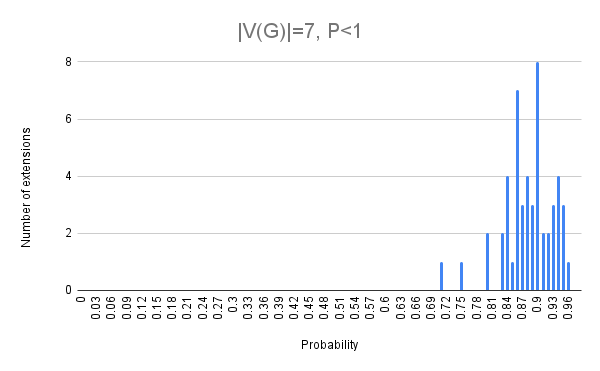}};
    
    \node at (-4,-13) {\includegraphics[scale=.33]{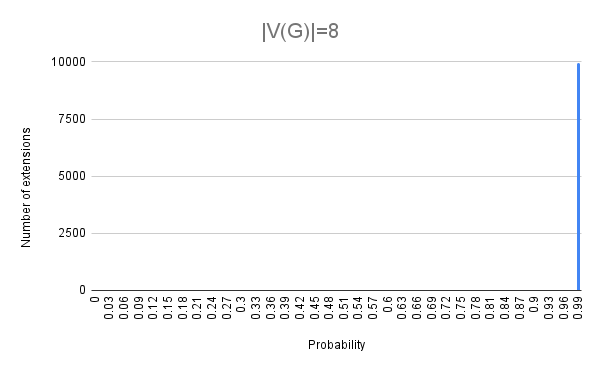}};
    \node at (4,-13) {\includegraphics[scale=.33]{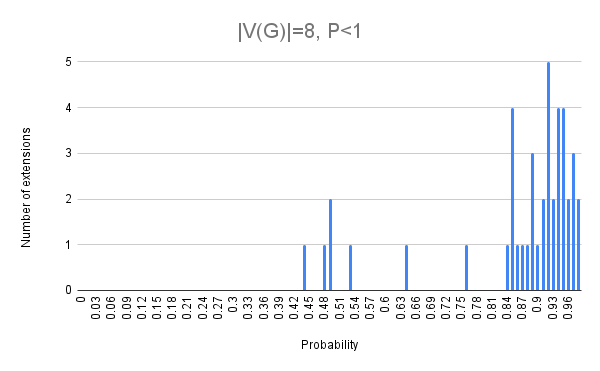}};
\end{tikzpicture}

\begin{tikzpicture}
    \node (M) at (-1,0) {$M(G):$};
    \node[vtx] (g1) at (0,0){};
    \node[vtx] (g2) at (1,0){};
    
    \draw[-Stealth, thick] (g1) to [bend left=35] (g2);
    \draw[-Stealth, thick] (g2) to [bend left = 0] (g1);
    \draw[-Stealth, thick] (g2) to [bend left = 35] (g1);
    
    \draw[Stealth-, thick] (g2)+(.4mm,-.8mm) arc (-163:163:3mm);
    
    \node at (-4,-3) {\includegraphics[scale=.33]{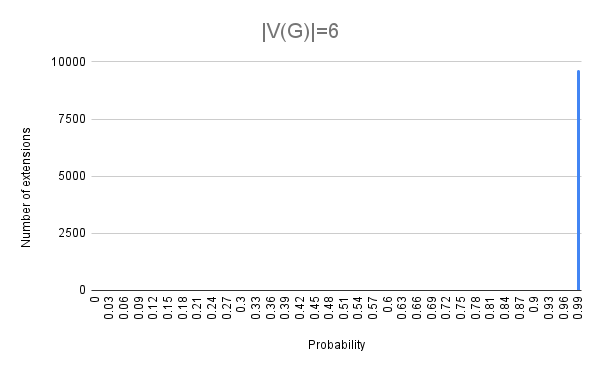}};
    \node at (4,-3) {\includegraphics[scale=.33]{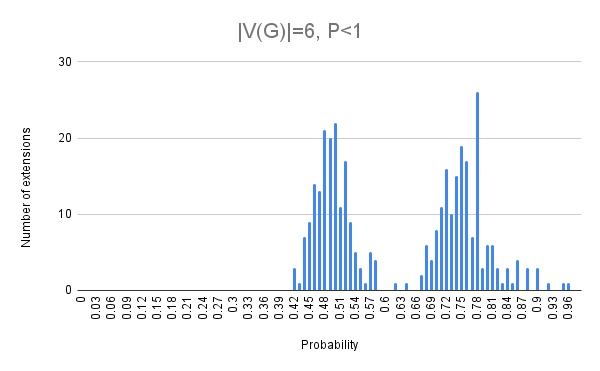}};
    
    \node at (-4,-8) {\includegraphics[scale=.33]{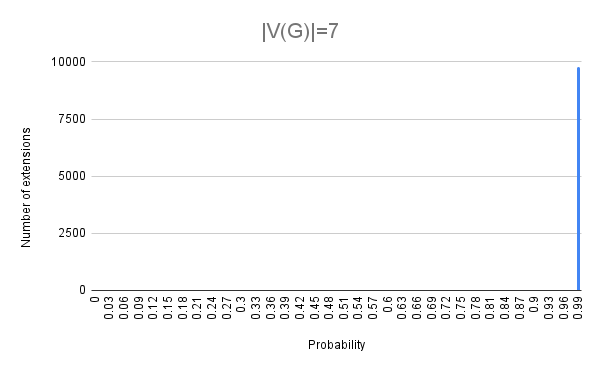}};
    \node at (4,-8) {\includegraphics[scale=.33]{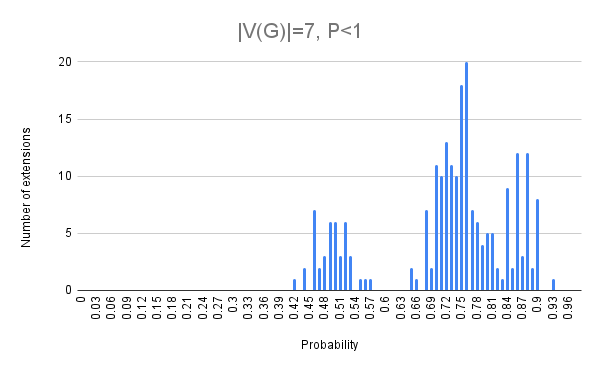}};
    
    \node at (-4,-13) {\includegraphics[scale=.33]{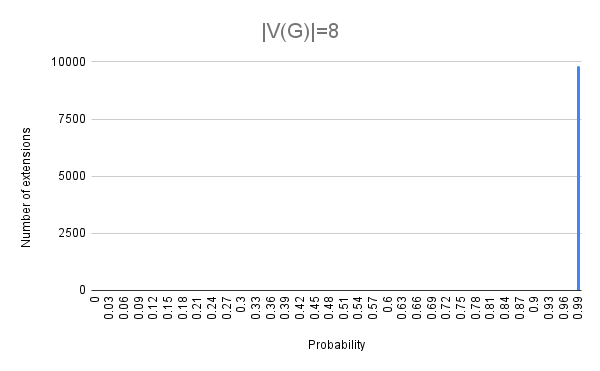}};
    \node at (4,-13) {\includegraphics[scale=.33]{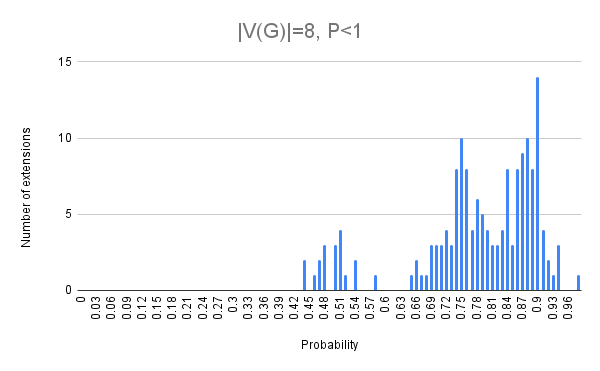}};
\end{tikzpicture}
\end{center}

\newpage

\section{Acknowledgments}
Many thanks to Sophie MacDonald and Brian Marcus for their generous feedback and support. The author also acknowledges the support of the Natural Sciences and Engineering Research Council of Canada (NSERC).

\bibliographystyle{plain}
\bibliography{refs}
\end{document}